\crefname{hypothesis}{Hypothesis}{Hypotheses}
\title{On growth of the set $A(A+1)$ in arbitrary finite fields}
\author{Ali Mohammadi\thanks{School of Mathematics and Statistics, University of Sydney, NSW 2006, Australia 
  (\email{alim@maths.usyd.edu.au}).}}
\begin{document}

\maketitle

% REQUIRED
\begin{abstract}
Let $\mathbb{F}_q$ be a finite field of order $q$, where $q$ is a power of a prime. For a set $A \subset \mathbb{F}_q$, under certain structural restrictions, we prove a new explicit lower bound on the size of the product set $A(A + 1)$. Our result improves on the previous best known bound due to Zhelezov and holds under more relaxed restrictions.
\end{abstract}

% REQUIRED
\begin{keywords}
  expanders, additive energy, sum-product, finite fields.
\end{keywords}

% REQUIRED
\begin{AMS}
  11B75
\end{AMS}

\section{Introduction}
Let $p$ denote a prime, $\mathbb{F}_q$ the finite field consisting of $q = p^m$ elements and $\mathbb{F}^{*}_q = \mathbb{F}_q\backslash\{0\}$. For sets $A, B \subset \mathbb{F}_q$, we define the sum set $A + B = \{ a + b : a\in A, b\in B\}$ and the product set $AB = \{ ab : a\in A, b\in B\}$.
Similarly, we define the difference set $A-B$ and the ratio set $A/B$.

The sum-product phenomenon in finite fields is the assertion that for $A \subset \mathbb{F}_q$, the sets $A+A$ and $AA$ cannot both simultaneously be small unless $A$ closely correlates with a coset of a subfield. A result in this direction is due to Li and Roche-Newton~\cite{RoLi}, who showed that if $|A\cap cG| \leq |G|^{1/2}$ for all subfields $G$ and elements $c$ in $\mathbb{F}_q$, then
 \begin{displaymath}
\max\{|A+A|, |AA|\} \gg (\log |A|)^{-5/11}|A|^{1+1/11}.
 \end{displaymath}

In the same spirit and under a similar structural assumption on the set $A$, one expects that, for all $\alpha \in \mathbb{F}_q^*$, either of the product sets $AA$ or $(A+\alpha)(A+\alpha)$ must be significantly larger than $A$. Zhelezov~\cite{Zhel} proved the estimate
\begin{equation}
\label{eqn:ZhelBound}
\max \{|AB|, |(A + 1)C|\} \gtrsim |A|^{1+1/559},
\end{equation}
for sets $A, B, C \subset \mathbb{F}_q$, under the condition that
\begin{equation}
\label{eqn:ZhelCon}
|AB \cap cG| \leq |G|^{1/2}
\end{equation}
for all subfields $G$ of $\mathbb{F}_q$ and elements $c\in \mathbb{F}_q$. Then, taking $B = A$ and $C = A+1$, under restriction \eqref{eqn:ZhelCon}, we have
\begin{equation}
\label{eqn:ZhelB1}
\max \{|AA|, |(A + 1)(A+1)|\} \gtrsim |A|^{1+1/559}.
\end{equation}

For sets $B_1, B_2 ,X \subset \mathbb{F}_q^*$, we recall Pl\"{u}nnecke's inequality (see Lemma~\ref{lem:PlRu})
 \begin{displaymath}
|B_1B_2| \leq\frac{|B_1X||B_2X|}{|X|}.
 \end{displaymath}
From this we can deduce that
 \begin{displaymath}
|A(A+1)|^{2} \geq |A| \cdot \max\{|AA|, |(A+1)(A+1)|\}.
 \end{displaymath}
Hence, by \eqref{eqn:ZhelB1}, we have the estimate
\begin{equation}
\label{eqn:ZhelB2}
|A(A+1)| \gtrsim |A|^{1 + \delta}
\end{equation}
with $\delta = 1/1118$, which holds under restriction~\eqref{eqn:ZhelCon} with $B=A$. Alternatively, by \eqref{eqn:ZhelBound}, with $B= A+1$ and $C = A$, the estimate \eqref{eqn:ZhelB2} holds with $\delta = 1/559$.

For large sets, $A\subset \mathbb{F}_q$ with $|A| \geq q^{1/2}$, Garaev and Shen~\cite{GarShe} proved the bound
\begin{equation}
\label{eqn:GSB}
|A(A+1)| \gg \min\{q^{1/2}|A|^{1/2}, |A|^{2}/q^{1/2}\}.
\end{equation}
Furthermore, it was demonstrated in \cite{GarShe} that in the range $|A| > q^{2/3}$, the bound \eqref{eqn:GSB} is optimal up to the implied constant.

In the realm of small sets $A\subset \mathbb{F}_q$, with $|A| \ll p^{5/8}$, Stevens and de~Zeeuw~\cite{StevZeeuw} obtained
 \begin{displaymath}
|A(A+1)| \gg |A|^{1+1/5}.
 \end{displaymath}
This result is based on a bound on incidences between points and lines in Cartesian products, proved in the same paper, which itself relies on a bound on incidences between points and planes due to Rudnev~\cite{Rud2}. We point out that the main result of \cite{Rud2} has led to many quantitatively strong sum-product type estimates, however these estimates are restricted to sets of size smaller than $p$.

Our main result, stated below, relies on a somewhat more primitive approach towards the sum-product problem in finite fields, often referred to as the additive pivot technique. Specifically, we adopt our main tools and ideas from \cite{JRN} and \cite{RoLi}.
\begin{theorem}
\label{thm:ShiftExpander}
Let $A \subseteq \mathbb{F}_q$. Suppose that
\begin{equation}
\label{eqn:ShiftECon}
 |A \cap cG| \ll \max\{|G|^{1/2}, |A|^{25/26}\} 
\end{equation}
for all proper subfields $G$ of $\mathbb{F}_q$ and elements $c\in \mathbb{F}_q$. Then for all $\alpha \in \mathbb{F}_q^{*}$, we have
 \begin{displaymath}
|A(A + \alpha)| \gtrsim\min\big\{|A|^{1+1/52}, q^{1/48}|A|^{1-1/48}\big\}.
 \end{displaymath}
\end{theorem}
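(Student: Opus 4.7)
A dilation reduces the problem to the case $\alpha = 1$: setting $A' = \alpha^{-1}A$, one has $A'(A'+1) = \alpha^{-2}A(A+\alpha)$, so the cardinality is preserved, and the subfield hypothesis~\eqref{eqn:ShiftECon} transfers to $A'$ since $A' \cap cG = \alpha^{-1}(A \cap \alpha cG)$ and $\alpha cG$ is a coset of $G$. Throughout the rest of the argument assume $\alpha = 1$, and suppose for contradiction that $K := |A(A+1)|/|A|$ is smaller than both $|A|^{1/52}$ and $q^{1/48}|A|^{-1/48}$, up to logarithmic factors.

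Following the additive pivot approach of~\cite{JRN, RoLi}, I would consider the multiplicative energy along the shift,
\[
E := \bigl|\{(a_1, a_2, b_1, b_2) \in A^4 : a_1(b_1+1) = a_2(b_2+1)\}\bigr|.
\]
Cauchy--Schwarz gives $E \geq |A|^4/|A(A+1)| = |A|^3/K$. The defining equation rearranges to $a_1 b_1 - a_2 b_2 = a_2 - a_1$, an identity that couples multiplicative data (the products $a_i b_i$) with additive data (the shift $a_2 - a_1 \in A - A$); this is exactly the coupling that the pivot technique is designed to exploit. Next I would bound $E$ from above using an incidence-geometric estimate derived from Rudnev's point--plane theorem~\cite{Rud2}. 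Such an estimate naturally splits into two regimes: a term polynomial in $|A|$ alone, which dominates when $|A|$ is small relative to $q$, and a $q$-dependent term which dominates when $|A|$ is large; these two regimes are precisely what give rise to the two quantities in the minimum.

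To sharpen the resulting preliminary bound to the stated exponents, a Balog--Szemer\'edi--Gowers-type refinement would extract a subset $A' \subseteq A$ of comparable size with small multiplicative doubling $|A'A'| \lesssim K^c |A|$, and Pl\"unnecke--Ruzsa would propagate this control to iterated product sets of $A'$. A quantitative subfield theorem in $\mathbb{F}_q$, in the spirit of the one used in~\cite{RoLi}, then shows that such an $A'$ must be essentially contained in a coset of a proper subfield $G$, forcing a contradiction with hypothesis~\eqref{eqn:ShiftECon} unless $K \gtrsim \min\{|A|^{1/52}, q^{1/48}|A|^{-1/48}\}$. The principal obstacle will be the delicate arithmetic bookkeeping in the BSG--Pl\"unnecke--subfield chain: the threshold $|A|^{25/26}$ in~\eqref{eqn:ShiftECon} is calibrated precisely to match the bootstrap strength of the pivot, so small losses in any of the energy transfer, the BSG lemma, or the subfield theorem would degrade the final exponents. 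Producing exactly $1/52$ and $1/48$ requires tracking constants tightly throughout; this numerology, rather than any single conceptual step, is where most of the work lies.
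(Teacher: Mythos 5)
Your opening reduction to $\alpha = 1$ by dilation and the observation that hypothesis~\eqref{eqn:ShiftECon} is preserved under dilation are correct and match the paper. Your description of the shifted multiplicative energy $E$ and the Cauchy--Schwarz lower bound are also in the right spirit. However, the core of your proposed argument has two gaps that make it incompatible with the setting of the theorem.

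First, you propose to bound the energy $E$ from above via an incidence estimate derived from Rudnev's point--plane theorem. That is exactly the ingredient the paper deliberately avoids: as the introduction points out, Rudnev's theorem, and the point--line incidence bound of Stevens and de~Zeeuw built on it, only give nontrivial estimates for sets of size smaller than the characteristic $p$. Theorem~\ref{thm:ShiftExpander} is stated over arbitrary $\mathbb{F}_q$ with $q = p^m$ and is meant to include $|A|$ well above $p$ (indeed the second term in the minimum, $q^{1/48}|A|^{1-1/48}$, only becomes the relevant bound when $|A|$ is comparable to $q^{1/2}$, which can exceed $p$ for $m > 2$). In that regime no point--plane incidence bound of the required strength is available, so the upper bound on $E$ you rely on does not exist. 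The paper instead bounds nothing incidence-theoretically: it applies a dyadic pigeonhole (Lemma~\ref{lem:DyadicEnergy}) to isolate a set $D$ of popular slopes, then uses the popular-lines lemma (Lemma~\ref{lem:Rudnev}, a slight generalisation of a lemma of Rudnev that is purely combinatorial, not the incidence theorem) and the Jones--Roche-Newton covering lemma (Lemma~\ref{lem:CoveringByShifts}).

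Second, there is no Balog--Szemer\'edi--Gowers step in the paper, and none is needed. After the dyadic and popular-abscissa/ordinate selection one works directly with the quotient set $R(\tilde{A}_{x_0})$: either some closure property ($1 + R \subseteq R$ or $x_0^{-1}\tilde{A}_{x_0}\cdot R \subseteq R$ or $R(\tilde{A}_{x_0}) = R(B_{y_0})$) fails, in which case Lemma~\ref{lem:RBcard} produces an element $r$ making $|Y - rY| = |Y|^2$ for suitable $Y$, and the covering Claim together with the iterated-difference bound~\eqref{iteratednound} gives a lower bound on $|A(A+1)|$; or all closure properties hold, in which case Lemma~\ref{lem:QuotientSetSubfield} shows $R(\tilde{A}_{x_0})$ is the subfield generated by $x_0^{-1}\tilde{A}_{x_0}$, and hypothesis~\eqref{eqn:ShiftECon} finishes the job. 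The $q$-dependent term in the minimum comes not from any incidence bound but from Case~4.1, where $|\tilde{A}_{x_0}| > q^{1/2}$ forces $R(\tilde{A}_{x_0}) = \mathbb{F}_q$ (Lemma~\ref{lem:RBFq}) and one applies the Bourgain--Glibichuk pivot (Lemma~\ref{lem:BouGlibPivot}) to obtain $q \ll |Y + \xi Y|$. In short, replacing the incidence/BSG machinery with the paper's pivot-plus-covering case analysis is not a matter of bookkeeping but of substance: your proposed tools do not apply to the full range of $q$ and $|A|$ the theorem covers.
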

Theorem~\ref{thm:ShiftExpander} provides a quantitative improvement over the relevant estimates implied by \eqref{eqn:ZhelBound} and holds under a more relaxed condition than those given by \eqref{eqn:ZhelCon}. It also improves on \eqref{eqn:GSB} in the range $q^{1/2} \leq |A| \lesssim q^{1/2 + 1/102}$.

Given a set $A \subset \mathbb{F}_q$, we define the additive energy of $A$ as the quantity
 \begin{displaymath}
E_{+}(A) = |\{ (a_1, a_2, a_3, a_4) \in A^{4} : a_1 + a_2 = a_3 + a_4\}|.
 \end{displaymath}
As an application of Theorem~\ref{thm:ShiftExpander}, we record a bound on the additive energy of subsets of $\mathbb{F}_q$.
\begin{corollary}
\label{cor:AdEnergy}
Let $A \subseteq \mathbb{F}_q$. Suppose that
\begin{equation}
\label{eqn:AdEnergyCon}
 |A \cap cG| \ll \max\{|G|^{1/2}, |AA|^{50/53}\} 
\end{equation}
for all proper subfields $G$ of $\mathbb{F}_q$ and elements $c\in \mathbb{F}_q$. Then for any $\alpha \in \mathbb{F}_q^{*}$, we have
\begin{equation}
\label{eqn:ShiftInter}
|A\cap (A-\alpha)| \lesssim |AA|^{1-1/53} + q^{-1/47}|AA|^{1+1/47}.
\end{equation}
Consequently, under restriction \eqref{eqn:AdEnergyCon}, we have
 \begin{displaymath}
E_{+}(A) \lesssim |A|^{2}\big(|AA|^{1-1/53} + q^{-1/47}|AA|^{1+1/47}\big).
 \end{displaymath}
\end{corollary}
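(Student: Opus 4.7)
The plan is to derive Corollary~\ref{cor:AdEnergy} as a direct consequence of Theorem~\ref{thm:ShiftExpander}, applied to the intersection set $S := A \cap (A - \alpha)$. The starting observation is that for every $s \in S$, both $s$ and $s + \alpha$ lie in $A$, which yields the containment $S(S + \alpha) \subseteq AA$ and hence the upper bound
\begin{displaymath}
|S(S + \alpha)| \leq |AA|.
\end{displaymath}

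To apply Theorem~\ref{thm:ShiftExpander} to the pair $(S, \alpha)$, I would verify the subfield condition $|S \cap cG| \ll \max\{|G|^{1/2}, |S|^{25/26}\}$. Since $S \subseteq A$, the hypothesis~\eqref{eqn:AdEnergyCon} gives $|S \cap cG| \ll \max\{|G|^{1/2}, |AA|^{50/53}\}$. The exponent $50/53$ is chosen precisely so that $(50/53) \cdot (26/25) = 52/53$; equivalently, $|AA|^{50/53} \leq |S|^{25/26}$ exactly when $|S| \geq |AA|^{52/53}$. I would therefore split into two cases. If $|S| \leq |AA|^{52/53}$, the first term $|AA|^{1-1/53}$ on the right-hand side of \eqref{eqn:ShiftInter} already dominates and we are done. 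Otherwise $S$ meets the subfield hypothesis of Theorem~\ref{thm:ShiftExpander}, and combining the conclusion of that theorem with the upper bound above yields
\begin{displaymath}
|AA| \gtrsim \min\bigl\{\, |S|^{53/52},\; q^{1/48}|S|^{47/48} \,\bigr\}.
\end{displaymath}
Solving for $|S|$ in each branch produces either $|S| \lesssim |AA|^{52/53}$ or $|S| \lesssim q^{-1/47}|AA|^{48/47}$, which together give \eqref{eqn:ShiftInter}.

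For the energy bound, I would combine the standard identity $E_{+}(A) = \sum_{s}|A \cap (A-s)|^2$ with the trivial equality $\sum_{s}|A \cap (A-s)| = |A|^2$. Separating the contribution $s = 0$ and applying an $\ell^{\infty}$--$\ell^{1}$ bound to the remaining terms gives
\begin{displaymath}
E_{+}(A) \leq |A|^2 + |A|^2 \cdot \max_{s \neq 0}|A \cap (A-s)|,
\end{displaymath}
into which one substitutes \eqref{eqn:ShiftInter}; the free $|A|^2$ is absorbed into the main term since $|AA|^{1-1/53} \gtrsim 1$.

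I do not anticipate a genuine obstacle here, as the corollary is essentially a repackaging of Theorem~\ref{thm:ShiftExpander}. The one slightly delicate step is numerical: the exponent $50/53$ appearing in \eqref{eqn:AdEnergyCon} is carefully calibrated so that the subfield hypothesis transfers cleanly from $A$ to the intersection $S$ precisely in the regime where the conclusion \eqref{eqn:ShiftInter} is not already trivial from the case analysis.
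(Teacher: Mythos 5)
Your proposal is correct and follows essentially the same route as the paper: apply Theorem~\ref{thm:ShiftExpander} to $S = A\cap(A-\alpha)$ using $S(S+\alpha)\subseteq AA$, noting that the subfield hypothesis transfers from $A$ to $S$ unless $|S|\lesssim|AA|^{52/53}$ (which already gives the bound), and then bound $E_{+}(A)$ via the $\ell^{\infty}$--$\ell^{1}$ estimate on the convolution counts. Your case split and the arithmetic identity $(50/53)(26/25)=52/53$ simply make explicit a step the paper states more tersely.
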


\subsection*{Asymptotic notation}
We use standard asymptotic notation. In particular, for positive real numbers $X$ and $Y$, we use $X = O(Y)$ or $ X \ll Y$ to denote the existence of an absolute constant $c >0$ such that $X \leq cY$. If $X \ll Y$ and $Y \ll X$, we write $X  = \Theta(Y)$ or $X \approx Y$. We also use $X \lesssim Y$ to denote the existence of an absolute constant $c >0$, such that $X \ll (\log Y)^{c} Y$.

\section{Preparations}
For $X \subset \mathbb{F}_q$, let $R(X)$ denote the quotient set of $X$, defined by
 \begin{displaymath}
R(X) =\bigg\{\frac{x_1 - x_2}{x_3 - x_4} : x_1, x_2, x_3, x_4 \in X, x_3 \neq x_4\bigg\}.
 \end{displaymath}
We present a basic extension of~\cite[Lemma 2.50]{TaoVu}.
\begin{lemma}
 \label{lem:RBcard}
 Let $X \subset \mathbb{F}_q$ and $r \in \mathbb{F}_q^{*}$. If $r\not\in R(X)$, for any nonempty subsets $X_1, X_2 \subseteq X$, we have
 \begin{displaymath}
 |X_1||X_2| = |X_1 - rX_2|.
 \end{displaymath}
\end{lemma}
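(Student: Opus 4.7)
The plan is to establish the identity by showing that the natural map $(x_1,x_2)\mapsto x_1 - rx_2$ from $X_1\times X_2$ to $X_1 - rX_2$ is injective, since the reverse inequality $|X_1 - rX_2|\leq |X_1||X_2|$ is automatic.

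To prove injectivity, I would suppose $x_1 - rx_2 = x_1' - rx_2'$ for some $x_1, x_1'\in X_1$ and $x_2, x_2'\in X_2$, and rearrange this to $x_1 - x_1' = r(x_2 - x_2')$. If $x_2\neq x_2'$, I can solve for $r$ as $r = (x_1 - x_1')/(x_2 - x_2')$; since $X_1, X_2\subseteq X$, all four elements lie in $X$, and so $r\in R(X)$, contradicting the hypothesis. Hence $x_2 = x_2'$, which forces $x_1 = x_1'$ as well. This gives the injectivity and hence $|X_1 - rX_2| = |X_1||X_2|$.

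There is no real obstacle here; the argument is a direct unpacking of the definition of $R(X)$, and the only subtlety is to verify that the ratio formed from elements of $X_1$ and $X_2$ genuinely falls within the scope of the set $R(X)$, which it does because $X_1, X_2\subseteq X$ and $x_2\neq x_2'$ ensures the denominator is nonzero.
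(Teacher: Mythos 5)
Your proof is correct and follows essentially the same route as the paper: both establish injectivity of the map $(x_1,x_2)\mapsto x_1-rx_2$ by observing that a collision would force $r=(x_1-x_1')/(x_2-x_2')\in R(X)$. You are slightly more careful in explicitly separating the case $x_2=x_2'$, which the paper leaves implicit, but the argument is the same.
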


\begin{proof}
Consider the mapping $\phi: X_1 \times X_2 \rightarrow X_1 - rX_2$ defined by $\phi(x_1, x_2) = x_1 - rx_2$. Suppose that $(x_1, x_2), (y_1, y_2) \in X_1\times X_2$ are distinct pairs satisfying $x_1 - rx_2 = y_1 - ry_2$. Then we get 
 \begin{displaymath}
r = \frac{x_1 - y_1}{x_2 - y_2},
 \end{displaymath}
which contradicts the assumption that $r\not\in R(X)$.  We deduce that $\phi$ is injective, which in turn implies the required result.
\end{proof}

The next lemma, which appeared in \cite[Corollary~2.51]{TaoVu}, is a simple corollary of Lemma~\ref{lem:RBcard}.
\begin{lemma}
\label{lem:RBFq}
Let $X \subset \mathbb{F}_q$ with $|X| > q^{1/2}$, then $R(X) = \mathbb{F}_q$.
\end{lemma}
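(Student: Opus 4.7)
The plan is to proceed by contrapositive, leveraging Lemma~\ref{lem:RBcard}. First I will argue that $0 \in R(X)$ trivially, since choosing $x_1 = x_2$ (which is possible as $X$ is nonempty, being forced to contain more than $q^{1/2} \geq 1$ elements) together with any $x_3 \neq x_4$ in $X$ (guaranteed since $|X| \geq 2$) yields the quotient $0$. So it suffices to show that every $r \in \mathbb{F}_q^{*}$ belongs to $R(X)$.

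Fix such an $r$ and suppose, for the sake of contradiction, that $r \notin R(X)$. Then the hypothesis of Lemma~\ref{lem:RBcard} is satisfied for $r$ and the set $X$ itself. Applying that lemma with $X_1 = X_2 = X$ gives the exact identity $|X|^{2} = |X - rX|$. On the other hand, $X - rX$ is a subset of $\mathbb{F}_q$, so $|X - rX| \leq q$. Combining these two bounds yields $|X|^{2} \leq q$, i.e.\ $|X| \leq q^{1/2}$, contradicting the assumption $|X| > q^{1/2}$. Hence $r \in R(X)$ for every $r \in \mathbb{F}_q^{*}$, and together with $0 \in R(X)$ this gives $R(X) = \mathbb{F}_q$.

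There is essentially no obstacle here: the work was done in Lemma~\ref{lem:RBcard}, and the present lemma is a one-line corollary via counting. The only minor point to be careful about is the inclusion of $0$, which must be handled separately since Lemma~\ref{lem:RBcard} concerns $r \in \mathbb{F}_q^{*}$, but this is immediate from the definition of $R(X)$.
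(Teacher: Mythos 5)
Your argument is correct and is precisely the ``simple corollary of Lemma~\ref{lem:RBcard}'' that the paper alludes to without writing out: for $r \in \mathbb{F}_q^{*}\setminus R(X)$ one gets $|X|^2 = |X-rX| \le q$, contradicting $|X| > q^{1/2}$, while $0 \in R(X)$ trivially. Nothing to add.
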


We have extracted Lemma~\ref{lem:QuotientSetSubfield}, stated below, from the proof of the main result in~\cite{RoLi}.
\begin{lemma}
\label{lem:QuotientSetSubfield}
Let $X \subset \mathbb{F}_q$ be such that 
 \begin{displaymath}
1+R(X) \subseteq R(X) \quad \text{and} \quad X\cdot R(X) \subseteq R(X).
 \end{displaymath}
Then $R(X)$ is the subfield of $\mathbb{F}_q$ generated by $X$.
\end{lemma}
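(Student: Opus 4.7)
The goal is to prove $R(X) = F$, where $F$ denotes the subfield of $\mathbb{F}_q$ generated by $X$. The inclusion $R(X) \subseteq F$ is immediate, since every element $(x_1-x_2)/(x_3-x_4)$ of $R(X)$ already lies in $F$. For the reverse inclusion, I would show that $R := R(X)$ is itself a subfield of $\mathbb{F}_q$ containing $X$, which forces $F \subseteq R$.

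First I would record the easy structural properties of $R$ coming directly from its definition: $0, 1 \in R$, and $R$ is closed under negation and (nonzero) inversion. Since $1 \in R$, the hypothesis $X \cdot R \subseteq R$ gives $X \subseteq R$, and because $\mathbb{F}_q$ is finite, for every nonzero $x \in X$ the inclusion $xR \subseteq R$ actually becomes an equality $xR = R$ on cardinality grounds. Hence $R$ is also invariant under multiplication by $x^{-1}$.

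The substantive step is extracting additive closure. I would introduce the translation stabilizer
\[
S := \{\, s \in R : s + R \subseteq R \,\}.
\]
The hypothesis $1 + R \subseteq R$ says $1 \in S$, and $S$ is trivially closed under addition. For nonzero $x \in X$ and $r \in R$, the identity $r + x = x(x^{-1}r + 1)$, combined with $x^{-1}R = R$ and $1 + R \subseteq R$, places $r + x$ in $R$, so $X \subseteq S$. The parallel identity $r + xs = x(x^{-1}r + s)$ then shows that multiplication by any $x \in X^{*}$ sends $S$ into itself. Consequently $S$ is an additive subgroup of $\mathbb{F}_q$ containing $\mathbb{F}_p$ (from $1 \in S$ and additive closure) and stable under multiplication by $X$, so $S$ contains the subring $\mathbb{F}_p[X]$. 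Being a finite integral domain, $\mathbb{F}_p[X]$ is itself a subfield of $\mathbb{F}_q$; since it contains $X$, it must coincide with $F$. Chaining the inclusions yields $F \subseteq S \subseteq R \subseteq F$, hence $R = F$.

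The only real obstacle is spotting how the two structural hypotheses combine to deliver additive closure; the stabilizer $S$ together with the two affine-map identities above is the natural bridge, and once $F \subseteq S$ is in hand the remaining inclusions are cosmetic.
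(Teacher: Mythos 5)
Your argument is correct. Note that the paper itself gives no proof of this lemma --- it is presented as extracted from the proof of the main theorem of Roche-Newton and Li~\cite{RoLi} --- so there is no in-paper argument to compare against line by line. Your verification is nonetheless valuable and, as far as I can tell, complete: the inclusion $R(X)\subseteq F$ is immediate, $1\in R(X)$ forces $X\subseteq R(X)$ via the hypothesis $X\cdot R(X)\subseteq R(X)$, finiteness upgrades $xR\subseteq R$ to $xR=R$, and the two affine identities $r+x=x(x^{-1}r+1)$ and $r+xs=x(x^{-1}r+s)$ correctly place $X$ inside the translation stabilizer $S$ and show $S$ is $X$-invariant. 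Since $S$ is additively closed and contains $1$, it contains $\mathbb{F}_p$ and hence the whole ring $\mathbb{F}_p[X]$, which is a finite integral domain inside $\mathbb{F}_q$ and therefore equals the subfield $F$ generated by $X$. The chain $F=\mathbb{F}_p[X]\subseteq S\subseteq R(X)\subseteq F$ closes the proof. One stylistic point: you should state up front that $|X|\geq 2$ (otherwise $R(X)$ is empty and the statement is vacuous), since this is what guarantees $0,1\in R(X)$ and the existence of a nonzero element of $X$; this is implicit in the paper's usage but worth making explicit.

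Compared with the route one typically sees in~\cite{RoLi} and its predecessors (Glibichuk--Konyagin style arguments), which work more directly with closure of $R(X)$ under the generating M\"obius-type maps $r\mapsto 1+r$, $r\mapsto -r$, $r\mapsto 1/r$, $r\mapsto xr$, your introduction of the stabilizer $S=\{s\in R : s+R\subseteq R\}$ neatly sidesteps the circularity one otherwise faces in trying to prove additive and multiplicative closure of $R(X)$ simultaneously: $S$ is additively closed essentially for free, and multiplicative closure need only be checked against the generators $X$ rather than against all of $R(X)$. This buys a shorter and cleaner deduction, at the cost of passing through the intermediate object $S$. Both approaches ultimately rest on the same two ingredients (finiteness to upgrade inclusions to equalities, and the finite-integral-domain fact), so this is a genuine reorganization rather than a fundamentally different proof.
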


The next result has been stated and proved in the proof of \cite[Theorem~1]{Rud}.
\begin{lemma}
\label{lem:pivotting}
Let $X \subset \mathbb{F}_q$ with $|R(X)| \gg |X|^2$. Then there exists $r\in R(X)$ such that for any subset $X^{'} \subset X$ with $|X^{'}| \approx |X|$, we have 
 \begin{displaymath}
|X^{'} + rX^{'}| \gg |X|^2.
 \end{displaymath}
\end{lemma}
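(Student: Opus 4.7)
The plan is to pivot via the additive energy of the pair $(X, rX)$ combined with a pigeonhole argument over the ratio set $R(X)$. My first step is to expand the additive energy
\[
E_{+}(X, rX) := |\{(a,b,c,d)\in X^4 : a + rb = c + rd\}|
\]
by rewriting the defining equation as $a - c = r(d-b)$. The degenerate contribution with $b = d$ forces $a = c$ and yields $|X|^2$ independently of $r$. When $b \neq d$, the value $r = (a-c)/(d-b)$ is uniquely determined by the quadruple, so if I set
\[
N(r) := |\{(a,b,c,d)\in X^4 : b \neq d,\ (a-c)/(d-b) = r\}|,
\]
then $E_{+}(X, rX) = |X|^2 + N(r)$, and $N(r) = 0$ unless $r \in R(X)$.

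Next, I would average $N$ over $R(X)$. Since every quadruple with $b \neq d$ contributes to $N(r)$ for exactly one $r \in R(X)$, the identity $\sum_{r \in R(X)} N(r) = |X|^3(|X|-1)$ holds, giving an average at most $|X|^4/|R(X)| \ll |X|^2$ by the hypothesis $|R(X)| \gg |X|^2$. Pigeonhole then produces an $r \in R(X)$ with $N(r) \ll |X|^2$, and hence $E_{+}(X, rX) \ll |X|^2$. Note that this selection automatically rules out $r = 0$, since $N(0) = |X|^2(|X|-1)$ is too large.

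Finally, I fix any subset $X' \subseteq X$ with $|X'| \approx |X|$. Restricting to $X'$ can only decrease the energy, so $E_{+}(X', rX') \leq E_{+}(X, rX) \ll |X|^2$. Cauchy--Schwarz applied to the representation function $t \mapsto |\{(x,y)\in X' \times X' : x + ry = t\}|$ then yields
\[
|X' + rX'| \geq \frac{|X'|^4}{E_{+}(X', rX')} \gg \frac{|X|^4}{|X|^2} = |X|^2,
\]
which is the desired conclusion. The argument is essentially routine, with no real obstacle; the only point worth flagging is that the pivot $r$ selected at the level of $X$ works simultaneously for every large subset $X' \subseteq X$, and this is exactly what the trivial monotonicity of $E_{+}$ under restriction of the underlying set delivers.
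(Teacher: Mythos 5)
Your proof is correct. The paper does not reproduce a proof but cites it to the proof of \cite[Theorem~1]{Rud}, and your argument is the same standard route used there: average the quantity $N(r)$ over $R(X)$ to find a nonzero pivot $r$ with $E_{+}(X,rX)\ll|X|^2$, note that this energy bound is inherited by any $X'\subseteq X$ with $|X'|\approx|X|$, and conclude by Cauchy--Schwarz.
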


The following lemma enables us to extend our main result to sets which are larger than $q^{1/2}$. See \cite[Lemma~3]{BouGlib} for a proof.
\begin{lemma}
\label{lem:BouGlibPivot}
Let $X_1, X_2 \subset \mathbb{F}_q$. There exists an element $\xi \in \mathbb{F}_q^{*}$ such that
 \begin{displaymath}
|X_1 + \xi X_2| \geq \frac{|X_1||X_2|(q-1)}{|X_1||X_2| + (q - 1)}.
 \end{displaymath}
\end{lemma}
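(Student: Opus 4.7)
The plan is to obtain the lemma via a standard averaging argument on additive energy. For $\xi \in \mathbb{F}_q^{*}$ let
\[
E(X_1, \xi X_2) = |\{(a,a',b,b') \in X_1^2 \times X_2^2 : a+\xi b = a'+\xi b'\}|.
\]
By Cauchy--Schwarz applied to the representation function of $X_1 + \xi X_2$,
\[
|X_1 + \xi X_2| \; \geq\; \frac{|X_1|^2|X_2|^2}{E(X_1,\xi X_2)},
\]
so it suffices to exhibit some $\xi \in \mathbb{F}_q^{*}$ for which the energy is appropriately small.

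Next I would evaluate $\sum_{\xi \in \mathbb{F}_q^{*}} E(X_1, \xi X_2)$ by swapping the order of summation, rewriting the energy equation as $a-a' = \xi(b'-b)$, and classifying each quadruple $(a,a',b,b')$ by whether $a = a'$ and whether $b = b'$. Quadruples with $a=a'$ and $b=b'$ contribute $q-1$ each; those with exactly one coincidence force $\xi = 0$ and contribute nothing; and those with $a \neq a'$ and $b \neq b'$ determine $\xi$ uniquely and nontrivially, contributing one each. This yields
\[
\sum_{\xi \in \mathbb{F}_q^{*}} E(X_1,\xi X_2) \;=\; (q-1)|X_1||X_2| + |X_1|(|X_1|-1)|X_2|(|X_2|-1).
\]

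By pigeonhole, some $\xi \in \mathbb{F}_q^{*}$ satisfies
\[
E(X_1,\xi X_2) \;\leq\; |X_1||X_2| + \frac{|X_1|(|X_1|-1)|X_2|(|X_2|-1)}{q-1} \;\leq\; |X_1||X_2|\cdot \frac{q-1 + |X_1||X_2|}{q-1}.
\]
Plugging this bound into the Cauchy--Schwarz inequality above gives the claimed estimate after a one-line simplification. There is no serious obstacle in this plan; the only care required is the case-split in the energy count, which is routine.
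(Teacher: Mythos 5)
Your argument is correct, and since the paper simply cites \cite[Lemma~3]{BouGlib} for this result rather than supplying a proof, there is nothing internal to compare against; your approach is the standard elementary one. The averaging identity, the fourfold case split in counting $\sum_{\xi\in\mathbb{F}_q^*}E(X_1,\xi X_2)$, the pigeonhole step, and the final Cauchy--Schwarz manipulation all check out, and the simplification $|X_1||X_2| + |X_1|(|X_1|-1)|X_2|(|X_2|-1)/(q-1) \leq |X_1||X_2|\big(q-1+|X_1||X_2|\big)/(q-1)$ is valid since $(|X_1|-1)(|X_2|-1)\leq |X_1||X_2|$. One small nitpick: for the coincidence case $a\neq a'$, $b=b'$ the equation $a-a'=\xi(b'-b)$ has \emph{no} solution in $\xi$ (rather than forcing $\xi=0$), but the contribution is zero either way, so this does not affect the count.
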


Next, we recall Ruzsa's triangle inequality. See \cite[Lemma~2.6]{TaoVu} for a proof.
\begin{lemma}
\label{lem:RuszaTriangle}
Let $X, B_1, B_2$ be nonempty subsets of an abelian group. We have 
 \begin{displaymath}
 |B_1 - B_2| \le \frac{|X + B_1||X + B_2|}{|X|}.
 \end{displaymath}
\end{lemma}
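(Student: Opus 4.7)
The plan is to establish the inequality by exhibiting an injection from $X\times(B_1-B_2)$ into $(X+B_1)\times(X+B_2)$. Any such injection immediately yields $|X|\,|B_1-B_2|\le|X+B_1|\,|X+B_2|$, which is equivalent to the claimed bound after dividing by $|X|$ (permissible since $X$ is nonempty).

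To construct the injection, I would first, for each element $d\in B_1-B_2$, fix once and for all a representation $d=\beta_1(d)-\beta_2(d)$ with $\beta_1(d)\in B_1$ and $\beta_2(d)\in B_2$; such a representation exists by the very definition of the difference set. Using this choice function, define the map $\phi:X\times(B_1-B_2)\to(X+B_1)\times(X+B_2)$ by
$$\phi(x,d)=\bigl(x+\beta_1(d),\,x+\beta_2(d)\bigr).$$
The map is well-defined because we only use group addition, which is available in any abelian group, and the image coordinates indeed lie in $X+B_1$ and $X+B_2$ respectively.

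The crux is verifying injectivity. Given the pair $\phi(x,d)=(u,v)$, subtracting the second coordinate from the first recovers $\beta_1(d)-\beta_2(d)=d$, so $d$ is determined by the image. Once $d$ is known, the fixed choice function returns $\beta_1(d)$, and then $x=u-\beta_1(d)$ is also determined. Hence $(x,d)$ is uniquely reconstructible from $\phi(x,d)$, and $\phi$ is injective. I do not anticipate any serious obstacle; the only subtle point is that the choice of representation $d=\beta_1(d)-\beta_2(d)$ must be made \emph{before} $\phi$ is defined and must not depend on $x$, since otherwise the reconstruction step could fail.
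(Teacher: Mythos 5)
Your proof is correct, and it is the standard argument: the paper does not reproduce a proof but cites Tao and Vu, Lemma 2.6, and the proof there is precisely this injection from $X\times(B_1-B_2)$ into $(X+B_1)\times(X+B_2)$ via a fixed choice of representatives. Your emphasis on fixing the choice function $d\mapsto(\beta_1(d),\beta_2(d))$ before defining $\phi$ is exactly the right point to flag, and the reconstruction of $(x,d)$ from the image is airtight.
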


In particular, for $A\subset \mathbb{F}_q^*$, by a multiplicative application of Lemma~\ref{lem:RuszaTriangle}, we have the useful inequality 
\begin{equation}
\label{eqn:RatioToShift}
|A/A| \leq \frac{|A(A+1)|^{2}}{|A|}.
\end{equation}

In the next two lemmas we state variants of the Pl\"{u}nnecke-Ruzsa inequality, which can also be found in \cite{KatzShe}.
\begin{lemma}
\label{lem:PlRu}
Let $X, B_1, \dots, B_k$ be nonempty subsets of an abelian group. Then 
 \begin{displaymath}
|B_1 + \cdots + B_k| \leq \frac{|X + B_1| \cdots |X + B_k|}{|X|^{k-1}}.
 \end{displaymath}
\end{lemma}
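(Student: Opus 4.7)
The plan is to prove the lemma by induction on $k$, reducing everything to a careful analysis of the $k=2$ case. For $k=1$, the inequality $|B_1| \leq |X+B_1|$ is immediate by translating $B_1$ by any fixed $x_0 \in X$. For $k=2$, the target $|X| \cdot |B_1+B_2| \leq |X+B_1| \cdot |X+B_2|$ follows from Petridis's lemma: choose a nonempty $Y \subseteq X$ minimising $|Y+B_1|/|Y|$, so that $|Y+B_1+C| \leq (|X+B_1|/|X|) \cdot |Y+C|$ for every finite $C$. Plugging in $C=B_2$ and using $|B_1+B_2| \leq |Y+B_1+B_2|$ (via translation by any $y \in Y$) closes the two-set case.

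For the inductive step I would actually prove the strengthened statement $|X+B_1+\cdots+B_k| \cdot |X|^{k-1} \leq \prod_{i=1}^k |X+B_i|$, which implies the lemma since $|B_1+\cdots+B_k| \leq |X+B_1+\cdots+B_k|$. Granting the strong two-set form $|A+B+C| \cdot |A| \leq |A+B| \cdot |A+C|$, apply it with $A=X$, $B=B_k$, and $C=B_1+\cdots+B_{k-1}$ to obtain $|X+B_1+\cdots+B_k| \cdot |X| \leq |X+B_k| \cdot |X+B_1+\cdots+B_{k-1}|$, and then use the inductive hypothesis to bound the last factor; a routine rearrangement of exponents finishes the induction.

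The main obstacle is upgrading Petridis's lemma, which only controls $|Y+B_1+B_2|$ for the minimising subset $Y \subseteq X$, to the strong form $|X+B_1+B_2| \cdot |X| \leq |X+B_1| \cdot |X+B_2|$. Bridging this gap requires either a secondary induction on $|X|$ (decomposing $X$ into $Y$ and $X \setminus Y$ and tracking inclusion-exclusion on the resulting sumsets) or a direct appeal to the classical Pl\"{u}nnecke graph-theoretic inequality phrased via magnification ratios. Since this is precisely the form of the Pl\"{u}nnecke-Ruzsa inequality for multiple summands recorded in \cite{TaoVu} and \cite{KatzShe}, I would invoke one of those established proofs of the strong three-set inequality rather than reproduce the graph-theoretic argument from scratch.
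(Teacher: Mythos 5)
The paper offers no proof of Lemma~\ref{lem:PlRu}; it simply cites \cite{KatzShe}. So the comparison here is between your proposal and the standard literature. Your $k=2$ step is fine: taking $Y\subseteq X$ minimising $|Y+B_1|/|Y|$, Petridis gives $|Y+B_1+B_2|\le \frac{|X+B_1|}{|X|}|Y+B_2|\le\frac{|X+B_1|}{|X|}|X+B_2|$, and $|B_1+B_2|\le|Y+B_1+B_2|$ finishes. But your inductive step contains a genuine gap. You propose to run the induction on the \emph{strengthened} statement $|X+B_1+\cdots+B_k|\,|X|^{k-1}\le\prod_i|X+B_i|$, with $X$ itself (not a subset of $X$) on the left, and for the step you invoke the three-set inequality $|A+B+C|\,|A|\le|A+B|\,|A+C|$. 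That inequality, without passing to a subset of $A$, is \emph{not} the Pl\"unnecke--Ruzsa inequality and is not what \cite{TaoVu} or \cite{KatzShe} record: those references give the form with $|B_1+\cdots+B_k|$ on the left (exactly Lemma~\ref{lem:PlRu}), or with $|X'+B_1+\cdots+B_k|$ for a large subset $X'\subsetneq X$ as in Lemma~\ref{lem:PlRuRefined}. The need to pass to a subset is precisely why the paper states Lemma~\ref{lem:PlRuRefined} separately; your citation plan therefore does not bridge the gap you correctly sensed. Moreover your own $k=2$ argument establishes only the weak base case $|B_1+B_2|\le|X+B_1|\,|X+B_2|/|X|$ --- Petridis controls $|Y+B_1+B_2|$ only for the minimising $Y$ --- so the base of the proposed strong induction is not in place either.

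More fundamentally, the naive induction on $k$ from the two-set case cannot close with the weak hypothesis: applying the $k=2$ bound to $B_1$ and $B_2+\cdots+B_k$ produces the factor $|X+B_2+\cdots+B_k|$, which the inductive hypothesis (which bounds $|B_2+\cdots+B_k|$, not $|X+B_2+\cdots+B_k|$) does not control; and re-applying Petridis inside the minimiser $Y$ fails because the ratio $|Z+B_2|/|Z|$ for $Z\subseteq Y$ need not be bounded by $|X+B_2|/|X|$. To prove Lemma~\ref{lem:PlRu} one genuinely needs one of: Pl\"unnecke's graph-theoretic inequality (as in \cite{TaoVu}); Ruzsa's covering argument as executed in \cite{KatzShe}; or Petridis's own multi-summand theorem, whose induction carries the stronger data ``there exists nonempty $X'\subseteq X$ with $|X'+B_1+\cdots+B_k|\le\big(\prod_i|X+B_i|/|X|\big)\,|X'|/|X|^{k-1}$'' and requires a more delicate choice of the nested subsets than the straightforward iteration you describe. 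Any of these replaces your inductive step; simply asserting the subset-free submodularity and appealing to the references does not.
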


\begin{lemma}
\label{lem:PlRuRefined}
Let $X, B_1, \dots, B_k$ be nonempty subsets of an abelian group. For any $0 < \epsilon < 1$, there exists a subset $X^{'} \subseteq X$, with $|X^{'}| \geq (1-\epsilon)|X|$ such that 
 \begin{displaymath}
|X^{'} +B_1 + \cdots + B_k| \ll_{\epsilon} \frac{|X + B_1| \cdots |X + B_k|}{|X|^{k-1}}. 
\end{displaymath}
\end{lemma}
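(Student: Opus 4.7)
The plan is to deduce Lemma~\ref{lem:PlRuRefined} by iterating the ``minimizer'' form of the Plünnecke-Ruzsa inequality. Specifically, the standard proof of Lemma~\ref{lem:PlRu} (via Petridis's graph-theoretic approach) in fact produces a nonempty subset $Y \subseteq X$ satisfying
\begin{equation*}
|Y + B_1 + \cdots + B_k| \le K_1 \cdots K_k \, |Y|, \quad \text{where } K_i = |X + B_i|/|X|.
\end{equation*}
I will take this stronger ``subset'' conclusion as the starting input; note that $K_1 \cdots K_k\,|Y| \le K_1 \cdots K_k\,|X| = \prod_i|X + B_i|/|X|^{k-1}$, so the sumset bound is already at the right order, and the only defect is that $Y$ may be a very small fraction of $X$.

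The key step is a greedy iteration. Set $X^{(0)} = X$, and given $X^{(j-1)}$, apply the subset inequality above to the sets $X^{(j-1)}, B_1, \ldots, B_k$, extracting a nonempty $Y_j \subseteq X^{(j-1)}$ with
\begin{equation*}
|Y_j + B_1 + \cdots + B_k| \le \prod_{i=1}^{k} \frac{|X^{(j-1)} + B_i|}{|X^{(j-1)}|} \cdot |Y_j|.
\end{equation*}
Set $X^{(j)} = X^{(j-1)} \setminus Y_j$ and stop once the disjoint union $X' := \bigcup_{j} Y_j$ has size at least $(1-\epsilon)|X|$; this is guaranteed to happen in finitely many steps, since each $Y_j$ is nonempty.

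Prior to termination $|X^{(j-1)}| \ge \epsilon |X|$, and combined with $|X^{(j-1)} + B_i| \le |X + B_i|$ this forces each factor in the previous display to be at most $\epsilon^{-1} K_i$. Hence
\begin{equation*}
|Y_j + B_1 + \cdots + B_k| \le \epsilon^{-k} K_1 \cdots K_k \, |Y_j|,
\end{equation*}
and summing over $j$ (using $\sum_j|Y_j| \le |X|$) gives $|X' + B_1 + \cdots + B_k| \le \epsilon^{-k} \prod_i |X + B_i|/|X|^{k-1}$, which is the required conclusion with implicit constant $\epsilon^{-k}$. The main obstacle is the Petridis/subset form of Plünnecke-Ruzsa: this is not literally the statement of Lemma~\ref{lem:PlRu}, but it is embedded in its standard proof, and once it is in hand the iterative extraction above is entirely elementary.
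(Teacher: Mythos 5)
Your argument is correct and is essentially the Katz--Shen proof that the paper cites: extract a nonempty piece $Y_j$ on which the Pl\"unnecke ratio is good, discard it, and iterate until a $(1-\epsilon)$-fraction has been collected, using $|X^{(j-1)}|\ge\epsilon|X|$ together with $|X^{(j-1)}+B_i|\le|X+B_i|$ to cap each ratio at $\epsilon^{-1}K_i$; the final union bound over the $Y_j$ then gives the stated estimate with constant $\epsilon^{-k}$. The one slip is in the sourcing of your input statement. Petridis's approach is not graph-theoretic --- its novelty is precisely that it replaces the Pl\"unnecke/Menger layered-graph machinery by a sumset-ratio minimizer --- and his lemma yields the subset form $|Y+hB|\le K^h|Y|$ for a \emph{single} summand; for \emph{distinct} summands the subset statement $|Y+B_1+\cdots+B_k|\le K_1\cdots K_k|Y|$ does not follow by naively iterating the minimizer (the ratios $|Y+B_i|/|Y|$ need not be controlled by $K_i$ once $Y$ is small), and the standard route is via Pl\"unnecke's commutative-graph argument, as in Gyarmati--Matolcsi--Ruzsa or in Katz--Shen's Lemma~2.1. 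That input is nevertheless a true theorem, so once it is granted your greedy extraction is sound and matches the cited proof.
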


The following two lemmas are due to Jones and Roche-Newton~\cite{JRN}.
\begin{lemma}
\label{lem:CoveringByShifts}
Let $Z \subseteq \mathbb{F}_q^*$. Suppose that $X, Y \subseteq  xZ + y$ for some $x \in \mathbb{F}_q^{*}$ and $y \in \mathbb{F}_q$. Fix $0 < \epsilon < 1/16$. Then,  $(1-\epsilon)|X|$ elements of $X$ can be covered by 
 \begin{displaymath}
 O_{\epsilon} \bigg(\frac{|Z(Z+1)|^2 |Z/Z|}{|X||Y|^2}\bigg)
 \end{displaymath}
translates of $Y$. Similarly, $(1-\epsilon)|X|$ elements of $X$ can be covered by this many translates of $-Y$.
\end{lemma}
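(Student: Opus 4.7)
The plan is to follow the pattern of reducing to $X, Y \subseteq Z$ via an affine change of variables and then to combine multiplicative Pl\"unnecke--Ruzsa bounds with a greedy covering argument. The first move is to use the bijection $t \mapsto (t-y)/x$, which sends $xZ + y$ onto $Z$, preserves sizes, and commutes with additive translation. Covering $X$ by translates of $Y$ is therefore equivalent to covering the corresponding images inside $Z$, so one may assume $X, Y \subseteq Z$ from this point on.

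Next, from $X, Y \subseteq Z$ one has $X(Y+1), Y(Y+1) \subseteq Z(Z+1)$, whence $|X(Y+1)|, |Y(Y+1)| \leq |Z(Z+1)|$. The multiplicative form of Ruzsa's triangle inequality (Lemma~\ref{lem:RuszaTriangle} applied in $\mathbb{F}_q^*$) with pivot $Y+1$ yields $|X/Y| \leq |Z(Z+1)|^2/|Y|$, and the same inequality with pivot $X+1$ gives $|X/Y| \leq |Z(Z+1)|^2/|X|$. The key algebraic identity is $x - y = y\bigl((x/y) - 1\bigr)$, which shows $X - Y \subseteq Y \cdot \bigl((X/Y) - 1\bigr)$. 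Combining this inclusion with the above quotient bounds and a further (multiplicative) Pl\"unnecke--Ruzsa step involving the shift $+1$, together with the inequality \eqref{eqn:RatioToShift}, would then produce a bound of the form
\[
|X - Y| \ll \frac{|Z(Z+1)|^2 \, |Z/Z|}{|X| \, |Y|}.
\]

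With this bound in hand the covering follows from a standard greedy argument. Since $\sum_{t} |X \cap (Y + t)| = |X||Y|$, some translate $Y + t$ intersects $X$ in at least $|X||Y|/|X - Y|$ points; iterating greedily on the uncovered part of $X$, after $O_\epsilon(|X - Y|/|Y|)$ steps one has covered a $(1-\epsilon)$ fraction of $X$. Substituting the bound on $|X - Y|$ yields the required $O_\epsilon\bigl(|Z(Z+1)|^2 |Z/Z|/(|X||Y|^2)\bigr)$ translates of $Y$. For the analogous statement with translates of $-Y$, the symmetric identity $x + y = y\bigl((x/y) + 1\bigr)$ gives the same bound on $|X + Y|$, and then the identical greedy argument applies.

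The main obstacle is the sharp bound on $|X - Y|$: a direct application of the multiplicative Ruzsa triangle together with $X - Y \subseteq Y \cdot ((X/Y) - 1)$ only gives $|X - Y| \leq |Z(Z+1)|^2$, which is too weak for the claimed covering. Producing the extra factor $|Z/Z|/(|X||Y|)$ requires carefully combining Pl\"unnecke--Ruzsa applications with both pivots $X+1$ and $Y+1$ and exploiting \eqref{eqn:RatioToShift} to convert between $|Z/Z|$ and $|Z(Z+1)|$-type quantities. Getting the bookkeeping right to obtain exactly this factor is the delicate technical heart of the proof.
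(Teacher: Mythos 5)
The paper does not supply a proof of Lemma~\ref{lem:CoveringByShifts}; it is quoted from Jones and Roche-Newton~\cite{JRN}, so there is no in-paper argument to compare against, and your proposal must be judged on its own mathematics.

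You correctly reduce to $X,Y\subseteq Z$ via the affine bijection, correctly obtain $|X/Y|\le |Z(Z+1)|^2/|Y|$ (and with pivot $X+1$, $\le |Z(Z+1)|^2/|X|$) from the multiplicative Ruzsa triangle, and correctly observe that the greedy covering argument reduces the lemma to a bound of the form $|X-Y|\ll |Z(Z+1)|^2|Z/Z|/(|X||Y|)$. However, you never prove that bound; you explicitly say that producing the extra factor $|Z/Z|/(|X||Y|)$ is the technical heart and that you have not carried it out. As written, then, the proposal is not a proof: the crucial quantitative step is missing.

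Worse, the step looks unreachable by the tools you invoke. From $x-y=y(x/y-1)$ you get $X-Y\subseteq\bigcup_{\xi\in X/Y}(\xi-1)Y$, hence $|X-Y|\le |X/Y|\,|Y|\le\min\{|Z(Z+1)|^2,\ |Z/Z|\,|Y|\}$. Neither branch of this minimum is, in general, dominated by $|Z(Z+1)|^2|Z/Z|/(|X||Y|)$: the first branch requires $|X||Y|\le|Z/Z|$ and the second requires $|X||Y|^2\le |Z(Z+1)|^2$, and neither inequality holds for all $X,Y\subseteq Z$. So a direct ``bound $|X-Y|$, then greedy-cover'' strategy using only the inclusion $X-Y\subseteq Y\cdot(X/Y-1)$ and multiplicative Pl\"unnecke--Ruzsa does not close. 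To recover the stated covering number one needs an argument that produces the dependence on $|X|$ and the extra power of $|Y|$ genuinely --- in~\cite{JRN} this is done through a multiplicative-energy/popularity argument on ratios rather than a single set-size bound on $X-Y$ --- and that idea is absent from your sketch. In short: the reduction and the greedy step are fine, but the identified ``delicate technical heart'' is exactly the lemma's content, and it remains unproved.
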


\begin{lemma}
\label{lem:BasicShiftBound}
Let $A \subseteq \mathbb{F}_q^*$. There exists a subset $A^{'} \subseteq A$ with $|A^{'}| \approx |A|$ such that 
 \begin{displaymath}
|A^{'} - A^{'}| \ll \frac{|A(A+1)|^4|A/A|^2}{|A|^5}.
 \end{displaymath}
\end{lemma}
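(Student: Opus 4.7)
My plan is to use Lemma~\ref{lem:CoveringByShifts} to efficiently cover a large subset of $A$ by translates of $A$ and of $-A$, and then combine these two coverings with Ruzsa's triangle inequality and the Pl\"{u}nnecke--Ruzsa inequalities (Lemma~\ref{lem:PlRu} and Lemma~\ref{lem:PlRuRefined}) to control $|A^{'} - A^{'}|$.

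Fix $\epsilon \in (0, 1/16)$ small and set
\[
M := \frac{|A(A+1)|^2 |A/A|}{|A|^3}.
\]
First, apply Lemma~\ref{lem:CoveringByShifts} with $Z = X = Y = A$, taking $x = 1$, $y = 0$ so that $X, Y \subseteq xZ + y$ holds trivially. This yields a subset $A_1 \subseteq A$ with $|A_1| \geq (1-\epsilon)|A|$ and a set $T_1 \subseteq \mathbb{F}_q$ with $|T_1| \ll_\epsilon M$ such that $A_1 \subseteq T_1 + A$. Applying the ``translates of $-Y$'' form of the same lemma in parallel, one obtains $A_2 \subseteq A$ with $|A_2| \geq (1-\epsilon)|A|$ and $T_2 \subseteq \mathbb{F}_q$ with $|T_2| \ll_\epsilon M$ such that $A_2 \subseteq T_2 - A$. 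Let $A^{'} := A_1 \cap A_2$, so $|A^{'}| \geq (1-2\epsilon)|A| \approx |A|$.

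Next, combine the two inclusions via
\[
A^{'} - A^{'} \subseteq (T_1 + A) - (T_2 - A) = (T_1 - T_2) + (A + A),
\]
which yields the crude estimate $|A^{'} - A^{'}| \ll M^2 |A + A|$. Since $M^2 = |A(A+1)|^4|A/A|^2/|A|^6$, the target bound $|A^{'} - A^{'}| \ll |A(A+1)|^4|A/A|^2/|A|^5$ amounts to replacing $|A+A|$ by $|A|$ in this estimate.

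To effect this replacement, one exploits that the covering bound $|T_i + A| \leq |T_i||A| \ll M|A|$ supplies ``small doubling'' of $T_i$ over $A$. Apply Lemma~\ref{lem:PlRuRefined} with pivot $X = A$ and summands $T_1, -T_2, A, -A$ in order to pass to a slightly smaller refinement of $A^{'}$ on which the sum $(T_1 - T_2) + (A+A)$ is controlled by
\[
\ll \frac{|A + T_1| \cdot |A + T_2| \cdot (\,\text{residual multiplicative factor}\,)}{|A|^{\,\cdot}} \,,
\]
with $|A + T_i| \ll M|A|$. The residual factor is then eliminated by invoking the multiplicative Pl\"{u}nnecke--Ruzsa estimate $|AA| \leq |A(A+1)|^2/|A|$ (a direct application of Lemma~\ref{lem:PlRu} multiplicatively with pivot $A+1$), which, via the inclusion $A + A \subseteq A \cdot (1 + A/A)$, converts the surplus $|A+A|$ into a power of $|A(A+1)|/|A|$ and $|A/A|/|A|$ already accounted for in $M^2$. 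A final bookkeeping gives $|A^{'} - A^{'}| \ll M^2 |A|$, as required.

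The main obstacle is precisely this last step: the crude application of the double covering leaves a factor of $|A + A|$ that is \emph{not} directly controlled by the hypotheses. Its removal requires an intertwined use of the additive refinement in Lemma~\ref{lem:PlRuRefined} and the multiplicative Pl\"{u}nnecke--Ruzsa estimates arising from $|A(A+1)|$ and $|A/A|$, and matching the exponents $|A(A+1)|^4|A/A|^2/|A|^5$ exactly is the technical heart of the argument.
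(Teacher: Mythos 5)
The paper itself does not prove Lemma~\ref{lem:BasicShiftBound}; it is cited from Jones and Roche-Newton~\cite{JRN}. So the question is whether your construction is correct on its own terms, and I believe it is not.

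Your steps up to the crude estimate are fine: with $Z=X=Y=A$, $x=1$, $y=0$, Lemma~\ref{lem:CoveringByShifts} covers most of $A$ by $O(M)$ translates of $A$ and, separately, of $-A$, with $M = |A(A+1)|^2|A/A|/|A|^3$, and intersecting gives a set $A'$ with $A' - A' \subseteq (T_1 - T_2) + (A+A)$ and so $|A'-A'| \ll M^2 |A+A|$. But the replacement of $|A+A|$ by $|A|$ in the last step does not go through, and the two mechanisms you invoke both fail for concrete reasons. First, applying the Pl\"{u}nnecke--Ruzsa machinery to $(T_1-T_2)+(A+A)$ moves you in the wrong direction: Lemma~\ref{lem:PlRu} with pivot $X=A$ and summands $T_1, -T_2, A, A$ gives
\begin{displaymath}
|T_1 - T_2 + A + A| \leq \frac{|A+T_1||A-T_2||A+A|^2}{|A|^3} \ll \frac{M^2 |A+A|^2}{|A|},
\end{displaymath}
which has an \emph{extra} factor of $|A+A|$ compared to your crude bound, not fewer; and Lemma~\ref{lem:PlRuRefined} only gives control on $|X' + T_1 - T_2 + A + A|$ for a refined $X' \subseteq A$, which is a \emph{larger} set than $(T_1-T_2)+(A+A) \supseteq A'-A'$, so passing to a subset of the pivot cannot help you here. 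Second, the identity $A + A \subseteq A(1+A/A)$ gives only $|A+A| \leq |A||A/A|$. The desired inequality $|A+A| \ll M|A| = |A(A+1)|^2|A/A|/|A|^2$ would then require $|A|^3 \ll |A(A+1)|^2$, that is $|A(A+1)| \gg |A|^{3/2}$, which is precisely the regime where the target bound is vacuous. In the interesting range $|A(A+1)| < |A|^{3/2}$ this route provably cannot close the gap, so the ``bookkeeping'' you gesture at does not exist. The actual removal of the parasitic $|A+A|$ (or $|A-A|$) factor in~\cite{JRN} requires a genuinely different idea than covering $A$ twice and then applying additive Pl\"{u}nnecke--Ruzsa; as written, your argument has a real hole at its declared ``technical heart.''
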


Next, we record a popularity pigeonholing argument. A proof is provided in \cite[Lemma~9]{Jones}.
\begin{lemma}
\label{lem:popularity}
Let $X$ be a finite set and let $f$ be a function such that $f(x) > 0$ for all $x \in X$. Suppose that 
 \begin{displaymath}
\sum_{x\in X} f(x) \geq K.
 \end{displaymath}
Let $Y = \{ x \in X : f(x) \geq K/2|X|\}.$ Then 
 \begin{displaymath}
\sum_{y\in Y} f(y) \geq \frac{K}{2}.
 \end{displaymath}
Additionally, if $f(x) \leq M$ for all $x\in X$, then $|Y| \geq K/(2M)$.
\end{lemma}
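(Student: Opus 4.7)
The plan is to prove the statement by a short two-part split-the-sum argument, since both assertions follow from bounding the contribution of the complement $X\setminus Y$.

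First I would write $\sum_{x\in X} f(x) = \sum_{y\in Y} f(y) + \sum_{x\in X\setminus Y} f(x)$ and observe that, by the very definition of $Y$, every $x\in X\setminus Y$ satisfies $f(x) < K/(2|X|)$. Thus the contribution from the complement is bounded above by $|X\setminus Y|\cdot K/(2|X|) \le K/2$. Combined with the hypothesis $\sum_{x\in X} f(x)\ge K$, this immediately yields $\sum_{y\in Y} f(y)\ge K/2$, which is the first conclusion.

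For the second conclusion, I would use the uniform upper bound $f(x)\le M$ to estimate $\sum_{y\in Y} f(y) \le M|Y|$. Plugging in the lower bound $\sum_{y\in Y} f(y) \ge K/2$ from the previous step and rearranging gives $|Y|\ge K/(2M)$, as required.

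There is no genuine obstacle here: the statement is a standard dyadic/popularity pigeonhole argument, and the only point requiring a small amount of care is to notice that $|X\setminus Y|\le|X|$ (used in the first estimate) so that the threshold $K/(2|X|)$ is calibrated exactly to absorb at most half of the total mass. Everything else is bookkeeping.
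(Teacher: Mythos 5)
Your proof is correct and is the standard split-the-sum argument; the paper does not reproduce a proof here but simply cites Jones, and the argument given there is essentially the same as yours. Nothing to add.
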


For sets $X, Y \subseteq \mathbb{F}_q$, we define the multiplicative energy between $X$ and $Y$ as the quantity
 \begin{displaymath}
E_{\times}(X, Y) = |\{ (x_1, x_2, y_1, y_2) \in X^{2}\times Y^{2} : x_1 y_1 = x_2 y_2\}|
 \end{displaymath}
and write simply $E_{\times}(X)$ instead of $E_{\times}(X, X)$. For $\xi \in Y/X$, let 
 \begin{displaymath}
r_{Y:X}(\xi) = |\{ (x, y) \in X\times Y : y/ x = \xi\}|.
 \end{displaymath}
Then, we have the identities
\begin{equation}
\label{eqn:r1stM}
\sum_{\xi \in Y/X} r_{Y:X}(\xi) = |X||Y|,
\end{equation}
\begin{equation}
\label{eqn:r2ndM}
\sum_{\xi \in Y/X} r_{Y:X}^{2}(\xi) = E_{\times}(X, Y).
\end{equation}
By a simple application of the Cauchy-Schwarz inequality we have
\begin{equation}
\label{eqn:EnergyCS}
E_{\times}(X, Y)|XY| \geq |X|^{2}|Y|^{2}.
\end{equation}

The remaining two lemmas together form the basis for the proof of Theorem~\ref{thm:ShiftExpander}. Lemma~\ref{lem:Rudnev} is a slight generalisation of \cite[Lemma~3]{Rud}.
\begin{lemma} 
\label{lem:DyadicEnergy}
Let $X, Y \subset \mathbb{F}_q$, with $|Y|\leq |X|$. There exists a set $D \subseteq Y/X$ and an integer $N\leq |Y|$ such that $E_{\times}(X, Y) \ll (\log|X|)|D|N^{2}$ and $|D|N < |X||Y|$. Also, for $\xi \in D$ we have $r_{Y:X}(\xi) \approx N$. Namely, the set of points
 \begin{displaymath}
P = \{(x, y) \in X \times Y: y/x \in D\}
 \end{displaymath}
is supported on $|D|$ lines through the origin, with each line containing $\Theta(N)$ points of $P.$ 
\end{lemma}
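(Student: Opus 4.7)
The plan is to prove the lemma by a routine $L^2$-dyadic pigeonhole applied to the multiplicity function $r_{Y:X}(\xi)$. I would start from the observation that, since $|Y| \le |X|$, for each $\xi \in Y/X$ the count $r_{Y:X}(\xi)$ lies in $\{1, 2, \ldots, |Y|\}$: any $y \in Y$ together with $\xi$ pins down $x = y/\xi$, so a given quotient is attained at most $|Y|$ times. I would then partition $Y/X$ into the dyadic level sets
\[
D_k = \{\xi \in Y/X : 2^{k-1} \le r_{Y:X}(\xi) < 2^k\}, \qquad k = 1, 2, \ldots, \lfloor \log_2 |Y|\rfloor + 1,
\]
yielding $O(\log|X|)$ nonempty classes.

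Next, splitting the identity \eqref{eqn:r2ndM} across this partition gives
\[
E_\times(X,Y) \;=\; \sum_k \sum_{\xi \in D_k} r_{Y:X}(\xi)^2 \;\le\; \sum_k 4\, |D_k|\, 2^{2(k-1)},
\]
so by pigeonhole some index $k^\ast$ satisfies $|D_{k^\ast}|\, 2^{2(k^\ast-1)} \gg E_\times(X,Y)/\log|X|$. I would set $D = D_{k^\ast}$ and $N = 2^{k^\ast - 1}$, so that on $D$ we have $N \le r_{Y:X}(\xi) < 2N$, giving both $r_{Y:X}(\xi) \approx N$ and $E_\times(X,Y) \ll (\log|X|)\, |D|\, N^2$. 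Since $r_{Y:X}(\xi)\le |Y|$ throughout $Y/X$, the bound $N \le |Y|$ is automatic from the lower edge of the dyadic interval.

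For the remaining estimate $|D| N < |X||Y|$, I would use the first-moment identity \eqref{eqn:r1stM} together with the lower bound $r_{Y:X}(\xi) \ge N$ for $\xi \in D$:
\[
|D|\, N \;\le\; \sum_{\xi \in D} r_{Y:X}(\xi) \;\le\; \sum_{\xi \in Y/X} r_{Y:X}(\xi) \;=\; |X||Y|,
\]
with strictness guaranteed as soon as there is some quotient $\xi \notin D$ with $r_{Y:X}(\xi) > 0$, which one may always arrange by a cosmetic adjustment (for instance, by removing a single element from $D$ at the cost of absolute constants). The geometric reformulation is an immediate unwinding of definitions: the equation $y/x = \xi$ cuts out the line through the origin of slope $\xi$, so the points of $P$ lie on precisely the $|D|$ such lines with $\xi \in D$, and each carries $r_{Y:X}(\xi) = \Theta(N)$ points of $P$.

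There is no substantive obstacle here — the lemma is a bookkeeping step. The only point requiring care is the choice of which edge of the dyadic interval to set $N$ to; taking $N$ at the \emph{lower} edge simultaneously secures $N \le |Y|$, the first-moment bound $|D|N \le |X||Y|$, and the energy bound $E_\times(X,Y) \ll (\log|X|)|D|N^2$ up to an overall factor of $4$, which is absorbed by the $\ll$ notation.
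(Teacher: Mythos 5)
Your proof is correct and follows essentially the same route as the paper: a dyadic decomposition of the multiplicity function $r_{Y:X}$, a pigeonhole on the second-moment identity \eqref{eqn:r2ndM} to pick the dominant dyadic class $D$ and level $N$, and the first-moment identity \eqref{eqn:r1stM} for the bound on $|D|N$. Your remark that the strict inequality $|D|N < |X||Y|$ requires a cosmetic tweak is fair but immaterial, since the paper only ever uses this bound up to implicit constants (and indeed the paper's own derivation has the same $\le$-versus-$<$ slack).
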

\begin{proof}
For $j\geq 0$, let $L_j = \{\xi \in Y/X : 2^{j} \leq r_{Y:X}(\xi) < 2^{j+1}\}$. Then, by \eqref{eqn:r2ndM}, we have
 \begin{displaymath}
 \sum_{j=0}^{\lfloor\log_2|X|\rfloor}\sum_{\xi\in L_j}r_{Y:X}^{2}(\xi) = E_{\times}(X, Y).
 \end{displaymath}
By the pigeonhole principle there exists some $N\geq 1$ such that, letting $D = \{\xi \in Y/X : N \leq r_{Y:X}(\xi) < 2N\}$, we have 
 \begin{displaymath}
 \frac{E_{\times}(X, Y)}{\log |X|} \ll \sum_{\xi\in D}r_{Y:X}^{2}(\xi) \ll |D|N^{2}.
 \end{displaymath}
Furthermore, by \eqref{eqn:r1stM}, we have
 \begin{displaymath}
|D|N < \sum_{\xi\in D}r_{Y:X}(\xi) \leq |X||Y|.
 \end{displaymath}
\end{proof}

\begin{lemma}
\label{lem:Rudnev}
Let $X, Y \subset \mathbb{F}_q$. Suppose $P \subset X\times Y$ is a set of points supported on $L$ lines through the origin, with each line containing $\Theta(N)$ points of $P$, so that $|P| \approx LN$. For $x_{*} \in X$ and $y_{*} \in Y$, we write $Y_{x_{*}} = \{y\in Y: (x_{*}, y) \in P\}$ and $X_{y_{*}} = \{x \in X: (x, y_{*}) \in P\}.$ There exists a popular abscissa $x_0$ and a popular ordinate $y_0$, so that 
 \begin{displaymath}
|Y_{x_0}| \gg \frac{LN}{|X|}, \quad |X_{y_0}| \gg \frac{LN}{|Y|}.
 \end{displaymath}
For $\xi \in \mathbb{F}_q$, we write $P_{\xi} = \{x: (x, \xi x) \in P\}$. There exists a subset $\widetilde{Y}_{x_0} \subseteq Y_{x_0}$ with
\begin{equation}
|\widetilde{Y}_{x_0}| \gg \frac{L^{2}N^{2}}{|X|^{2}|Y|},
\end{equation}
such that for every $z\in \widetilde{Y}_{x_0}$, we have
\begin{equation}
|P_{z/x_{0}} \cap X_{y_0}| \gg \frac{L^{2}N^{3}}{|X|^{2}|Y|^{2}}.
\end{equation}
\end{lemma}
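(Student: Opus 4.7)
The plan is to proceed in three stages: $(i)$~a first popularity argument pins down candidates for $x_0$ and $y_0$; $(ii)$~a double-counting of ordered triples in $P^3$, combined with Cauchy--Schwarz, supplies a lower bound on $T(x_0, y_0) := \sum_{z \in Y_{x_0}} |P_{z/x_0} \cap X_{y_0}|$; $(iii)$~a second popularity argument extracts $\widetilde Y_{x_0}$.  In $(i)$, I apply Lemma~\ref{lem:popularity} to $f(x) = |Y_x|$ on $X$, whose total is $\sum_x f(x) = |P| \approx LN$ and whose supremum is at most $|Y|$; this yields a nonempty popular set $X^{\mathrm{pop}} = \{x : |Y_x| \geq LN/(2|X|)\}$, and symmetrically $Y^{\mathrm{pop}}$.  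Any $x_0 \in X^{\mathrm{pop}}$ and $y_0 \in Y^{\mathrm{pop}}$ already meet the first two claims of the lemma.

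The heart of stage $(ii)$ is the identity expressing $T(x_0, y_0)$ as the number of ordered triples $(p_1, p_2, p_3) \in P^3$ with $p_1 = (x_0, z)$, $p_3 = (x, y)$ collinear through the origin with $p_1$ (so $y/x = z/x_0$), and $p_2 = (x, y_0)$ sharing the abscissa of $p_3$.  Summing over all $(x_0, y_0) \in X \times Y$ and grouping by $p_3 = (x, y) \in P$ yields $|P_{y/x}| = \Theta(N)$ choices for $p_1$ on $\ell_{y/x}$ and $|Y_x|$ choices for $p_2$, so Cauchy--Schwarz applied to $\sum_x |Y_x|^2 \geq |P|^2/|X|$ gives
\[
\sum_{x_0 \in X,\, y_0 \in Y} T(x_0, y_0) \;\approx\; N \sum_{x \in X} |Y_x|^2 \;\gg\; \frac{L^2 N^3}{|X|}.
\]
A popularity/pigeonhole restriction then delivers a popular pair $(x_0, y_0) \in X^{\mathrm{pop}} \times Y^{\mathrm{pop}}$ carrying $T(x_0, y_0) \gg L^2 N^3/(|X|^2 |Y|)$, the non-popular contribution being controlled via the bound $\sum_{y_0} T(x_0, y_0) \leq N |Y_{x_0}| |Y|$ for each fixed $x_0$, which is small once the popularity threshold on $|Y_{x_0}|$ is in force.

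For stage $(iii)$, I apply Lemma~\ref{lem:popularity} once more, to $g(z) = |P_{z/x_0} \cap X_{y_0}|$ on $Y_{x_0}$: the total is $T(x_0, y_0)$, the domain has $|Y_{x_0}| \leq |Y|$, and $g(z) \leq |P_{z/x_0}| = \Theta(N)$ since each line through the origin contains $\Theta(N)$ points of $P$.  Taking $\widetilde Y_{x_0}$ to consist of the popular $z$'s then gives the pointwise bound $g(z) \geq T(x_0, y_0)/(2|Y_{x_0}|) \gg L^2 N^3/(|X|^2 |Y|^2)$ and the size bound $|\widetilde Y_{x_0}| \gg T(x_0, y_0)/N \gg L^2 N^2/(|X|^2 |Y|)$, matching the required conclusions.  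The most delicate part, and the step I expect to be the main obstacle, is the pigeonhole restriction in stage $(ii)$: one has to calibrate the popularity thresholds so that the non-popular contribution, uniformly bounded by $LN^2|Y|/2$, is strictly dominated by the total $L^2 N^3/|X|$, guaranteeing that a popular pair genuinely carries a constant fraction of the triple count.
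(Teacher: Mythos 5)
Your Stage (ii) has a genuine gap at exactly the point you flag as delicate, and the calibration you propose does not close it. You lower-bound the \emph{unrestricted} total $\sum_{x_0 \in X,\, y_0 \in Y} T(x_0, y_0) \gg L^2N^3/|X|$ via Cauchy--Schwarz, and then try to show the non-popular contribution is negligible. Your bound on that contribution is $LN^2|Y|/2$, and you claim this is strictly dominated by $L^2N^3/|X|$. But the ratio is
\[
\frac{LN^2|Y|/2}{L^2N^3/|X|} \;=\; \frac{|X||Y|}{2LN},
\]
and since $LN \approx |P| \leq |X||Y|$, this ratio is $\geq 1/2$ and is typically \emph{large} whenever $P$ is a sparse subset of $X\times Y$ (the generic situation in the application). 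So the non-popular mass can swamp the total, and the pigeonhole does not localise to a popular pair. In fact a popular pair carrying a large $T$ cannot be extracted from the unrestricted total by any generic threshold argument: the lower bound $T(x_0,y_0) \gg L^2N^3/(|X|^2|Y|)$, by itself, does not imply $|Y_{x_0}| \gg LN/|X|$ (it only gives $|Y_{x_0}| \geq T/N \gg L^2N^2/(|X|^2|Y|)$, which is weaker by a factor $LN/(|X||Y|) \leq 1$), so popularity must be secured separately and \emph{before} the pigeonhole, not afterwards.

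The paper sidesteps this by restricting first and counting second. It runs three layered popularity arguments to extract $Y' \subseteq Y$, then $X' \subseteq X$, then a set of popular slopes $D' \subseteq D$ (with $|D'| \gg L$, and each $\xi \in D'$ supporting $\Omega(N)$ points of the twice-restricted $P''$), and only then lower-bounds the already-localised quantity $\Sigma = \sum_{(x,y)\in X'\times Y'}\sum_{z\in Y_x}|P_{z/x}\cap X_y| \gg L^2N^3/|X|$. A single pigeonhole over the at most $|X||Y|$ popular pairs $(x_0,y_0) \in X'\times Y'$ then yields the required pair, automatically popular because it is drawn from $X'\times Y'$. Your Stages (i) and (iii) are fine and closely parallel the paper's (the final popularity application with $g(z) = |P_{z/x_0}\cap X_{y_0}| \leq \Theta(N)$ is exactly the paper's closing step); it is the unrestricted Cauchy--Schwarz in Stage (ii), and the attempt to subtract off the bad mass afterward, that needs to be replaced by the restricted count.
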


\begin{proof}
Observing that
 \begin{displaymath}
\sum_{y\in Y} |X_y| = |P| \approx LN,
 \end{displaymath}
by Lemma~\ref{lem:popularity}, there exists a subset $Y^{'} \subseteq Y$ such that, for all $y\in Y^{'}$, we have $|X_y| \gg LN/|Y|$. Let $P^{'}  = \{(x, y) \in P: y \in Y^{'}\}$ so that $|P^{'}| \gg LN$. Then 
 \begin{displaymath}
\sum_{x\in X} |Y_x\cap Y^{'}| = \sum_{y\in Y^{'}}|X_y| = |P^{'}|  \gg LN.
 \end{displaymath}
By Lemma~\ref{lem:popularity}, there exists a subset $X^{'} \subseteq X$ such that for all $x\in X^{'}$ we have 
\begin{equation}
\label{eqn:YxPopBound}
 |Y_x\cap Y^{'}|  \gg \frac{LN}{|X|}.
\end{equation}
Letting $P^{''} = \{(x, y) \in P^{'}: x \in X^{'}\},$ we have $|P^{''}| \gg LN.$

Let $D = \{y/x : (x, y) \in P^{''}\}$ and let $D^{'} \subseteq D$ denote the set of elements $\xi$ such that the lines $l_{\xi} $, determined by $\xi$, each contain $\Omega(N)$ points of $P^{''}$. It follows by Lemma~\ref{lem:popularity} that $|D^{'}| \gg L$. Now, we proceed to establish a lower bound on the sum
\begin{equation}
\label{eqn:sigma}
\Sigma = \sum_{(x, y) \in X^{'}\times Y^{'} }\sum_{z\in Y_{x}}|P_{z/x}\cap X_y|.
\end{equation}
We write $z\sim x$, if $(x, z)$ is a point of $P$. Then
\begin{align*}
\label{eqn:Sigma2}
\Sigma &\gg\sum_{\substack{(x, y) \in X^{'}\times Y^{'} \\ z:z\sim x}} |P_{z/x}\cap X_y|  \\ &\gg N\sum_{\xi \in D^{'}}\sum_{y\in Y^{'}}|P^{''}_{\xi}\cap X_y|.
\end{align*}
For a fixed $\xi \in D^{'}$, the inner sum may be bounded by the observation that 
 \begin{displaymath}
\sum_{y\in Y^{'}}|P^{''}_{\xi}\cap X_y| = \sum_{x\in P^{''}_{\xi}}|Y_x\cap Y^{'}|.
 \end{displaymath}
Recall that $|D^{'}|\gg L$ and that for $\xi \in D^{'}$, we have $|P^{''}_{\xi}|\gg N$. Then, by \eqref{eqn:YxPopBound}, we have
 \begin{displaymath}
\Sigma \gg N \cdot L \cdot N \cdot \frac{LN}{|X|}.
 \end{displaymath}
By the pigeonhole principle, applied to \eqref{eqn:sigma}, there exist $(x_0, y_0) \in X^{'} \times Y^{'}$ such that 
 \begin{displaymath}
\sum_{z\in Y_{x_0}}|P_{z/x_{0}}\cap X_{y_0}| \gg \frac{L^{2}N^{3}}{|X|^{2}|Y|}.
 \end{displaymath}
By our assumption, that every line through the origin contains $O(N)$ points of $P$, it follows that for all $z\in Y$, we have $|P_{z/x_{0}}| \ll N$. Then, letting $\widetilde{Y}_{x_0} \subseteq Y_{x_0}$ to denote the set of $z\in Y_{x_0}$ with the property that
 \begin{displaymath}
|P_{z/x_{0}}\cap X_{y_0}| \gg \frac{L^{2}N^{3}}{|X|^{2}|Y|^{2}},
 \end{displaymath}
by Lemma~\ref{lem:popularity}, we have 
 \begin{displaymath}
|\widetilde{Y}_{x_0}| \gg \frac{L^{2}N^{2}}{|X|^{2}|Y|}.
 \end{displaymath}
\end{proof}

\section{Proof of Theorem~\ref{thm:ShiftExpander}}
It suffices to prove the required result for $\alpha =1$. Then the general statement immediately follows since under condition~\eqref{eqn:ShiftECon} the set $A$ can be replaced by any of its dilates $cA$, for $c\in \mathbb{F}_q^*$. Without loss of generality assume $0 \not\in A$. By Lemma~\ref{lem:BasicShiftBound}, combined with \eqref{eqn:RatioToShift}, there exists a subset $A^{'} \subseteq A$, with $|A^{'}| \approx |A|$, such that 
 \begin{displaymath}
|A^{'} - A^{'}| \ll \frac{|A(A+1)|^8}{|A|^7}.
 \end{displaymath}
By Lemma~\ref{lem:PlRuRefined} there exists a further subset $A^{''} \subseteq A^{'}$, with $|A^{''}| \approx |A^{'}|$, such that
 \begin{displaymath}
|A^{''} -A^{''} -A^{''} -A^{''}| \ll \frac{|A^{'} - A^{'}|^3}{|A|^2}.
 \end{displaymath}
Since $|A^{''}| \approx |A|$, we reset the notation $A^{''}$ back to $A$ and henceforth assume the inequalities
\begin{equation}
\label{differencebound}
|A -A| \ll \frac{|A(A+1)|^8}{|A|^7},
\end{equation}
\begin{equation}
\label{iteratednound}
|A -A -A -A| \ll \frac{|A(A+1)|^{24}}{|A|^{23}}.
\end{equation}
We apply Lemma~\ref{lem:DyadicEnergy} to identify a set $D \subseteq A/(A+1)$ and an integer $N\geq 1$ such that for $\xi \in D$ we have $r_{A:(A+1)}(\xi) \approx N$. Additionally, letting $L = |D|$, in view of \eqref{eqn:EnergyCS}, we have 
\begin{equation}
\label{eqn:SEMECS}
M:= LN^{2} \gg \frac{E_{\times}(A+1, A)}{\log|A|} \geq \frac{|A|^{4}}{|A(A+1)|\log|A|}.
\end{equation}
We define $P \subseteq (A+1) \times A$ by
 \begin{displaymath}
P = \{(x, y) \in (A+1) \times A: y/x \in D\}.
 \end{displaymath}
Then $|P| \approx LN$.
Now, since $LN < |A|^{2}$ and $N< |A|$, we get
\begin{equation}
\label{eqn:NLB}
N, L > \frac{M}{|A|^{2}}.
\end{equation}
For $\xi \in D$, we define the projection onto the $x$-axis of the line with slope $\xi$ as
 \begin{displaymath}
P_{\xi} = \{x:(x, \xi x) \in P\} \subset A+ 1.
 \end{displaymath}
Similarly for $\lambda \in D^{-1}$ let
 \begin{displaymath}
Q_{\lambda} = \{y:(\lambda y,  y) \in P\} \subset A.
 \end{displaymath}
Then for $\xi \in D$ and $\lambda \in D^{-1}$, we have
\begin{equation}
\label{eqn:ProjectionsSizes}
|P_{\xi}|, |Q_{\lambda}| \approx N, \quad  \xi P_{\xi} \subset A\quad \text{and}\quad \lambda Q_{\lambda} \subset A + 1.
\end{equation}
By Lemma~\ref{lem:Rudnev}, with $X= A+1$ and $Y = A$, there exists a pair of elements $(x_0, y_0) \in (A+1)\times A$ such that the sets $A_{x_0} \subseteq A$ and $B_{y_0} \subseteq A+1$ satisfy 
\begin{equation}
\label{eqn:Ax0By0Sizes}
|A_{x_0}|, |B_{y_0}| \gg \frac{LN}{|A|}, \quad x^{-1}_{0}A_{x_0} \subset D \quad \text{and} \quad y^{-1}_{0}B_{y_0} \subset D^{-1}.
\end{equation}
Moreover, there exists a further subset $\tilde{A}_{x_0} \subseteq A_{x_0}$, with 
\begin{equation}
\label{A1Size}
|\tilde{A}_{x_0}| \gg \frac{LM}{|A|^{3}},
\end{equation}
such that for all $z \in \tilde{A}_{x_0}$, letting $S_z = P_{z/x_{0}} \cap B_{y_0}$, we have 
\begin{equation}
\label{eqn:SzSize}
|S_z| \gg \frac{LMN}{|A|^{4}}.
\end{equation}

We require the following corollary of Lemma~\ref{lem:CoveringByShifts} throughout the remainder of the proof.
\begin{claim}
\label{coveringapp}
For $n\leq 4$ let $a_1, \dots, a_n$ denote arbitrary elements of $\tilde{A}_{x_0}$. Given any set $C\subset A+1$, there exists a subset $C^{'} \subset C$, with $|C^{'}|\approx |C|$, such that the sets $a_{i}C^{'}$ can each be covered by
\begin{equation}
\label{eqn:1stCApp}
O\bigg(\frac{|A(A+1)|^4}{|C||A|N^{2}}\bigg)
\end{equation}
translates of $\pm x_0 A$. 

Suppose $b_1, \dots, b_4 \in B_{y_0}$. Let
\begin{equation}
\label{eqn:2ndCApp}
\Gamma:= \frac{|A|^{2}|A(A+1)|^4}{M^2}.
\end{equation}
There exists a subset $A^{'} \subseteq \tilde{A}_{x_0}$, with $|A^{'}| \approx |\tilde{A}_{x_0}|$, such that for $1\leq i\leq 4$ the sets $b_{i}A^{'}$ can each be covered by $O(\Gamma)$ translates of $\pm y_0 A$. 
\end{claim}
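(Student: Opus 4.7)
Both parts will follow from a direct application of Lemma~\ref{lem:CoveringByShifts} with $Z=A$, together with the estimate $|A/A|\leq |A(A+1)|^{2}/|A|$ coming from~\eqref{eqn:RatioToShift}. In each case, the task is to choose an auxiliary set $Y$ of cardinality $\approx N$ so that $Y$ lies inside a single translate of $\pm x_0A$ (resp.\ $\pm y_0A$), while $X$ and $Y$ sit inside a common set of the form $xZ+y$ with $Z=A$.

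For the first part, I would fix $a_i\in \tilde{A}_{x_0}$, set $\xi_i:=a_i/x_0$ and take $Y=a_iP_{\xi_i}$. By \eqref{eqn:Ax0By0Sizes} we have $\xi_i\in D$, and \eqref{eqn:ProjectionsSizes} then gives $P_{\xi_i}\subseteq A+1$, $|P_{\xi_i}|\approx N$ and $\xi_iP_{\xi_i}\subseteq A$. Hence $Y\subseteq a_i(A+1)$ and also $Y\subseteq x_0A$. With $X=a_iC\subseteq a_i(A+1)=a_iA+a_i$, Lemma~\ref{lem:CoveringByShifts} applies with $x=a_i$, $y=a_i$, $Z=A$. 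Any cover of a $(1-\epsilon)$-fraction of $X$ by translates of $Y$ is then automatically a cover by the same number of translates of $x_0A$, and taking $-Y$ instead yields the analogous statement for $-x_0A$. Substituting $|Y|\approx N$, $|X|=|C|$, and replacing $|A/A|$ via~\eqref{eqn:RatioToShift}, the bound from the lemma becomes the claimed $O\!\left(|A(A+1)|^{4}/(|C||A|N^{2})\right)$. The set $C'$ is obtained by intersecting the ``good'' subsets of $C$ furnished by the lemma for each of the $n\leq 4$ elements $a_i$, choosing $\epsilon$ small enough (say $\epsilon=1/32$) so that $|C'|\approx |C|$.

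For the second part, I would fix $b_i\in B_{y_0}$, set $\lambda_i:=b_i/y_0$ and take $Y=b_iQ_{\lambda_i}$. By \eqref{eqn:Ax0By0Sizes}, $\lambda_i\in D^{-1}$, and \eqref{eqn:ProjectionsSizes} gives $Q_{\lambda_i}\subseteq A$ with $|Q_{\lambda_i}|\approx N$ and $\lambda_iQ_{\lambda_i}\subseteq A+1$. Letting $X=b_i\tilde{A}_{x_0}$, both $X$ and $Y$ sit in $b_iA$, so Lemma~\ref{lem:CoveringByShifts} applies with $x=b_i$, $y=0$, $Z=A$. Using $b_i=y_0\lambda_i$,
\[
Y=b_iQ_{\lambda_i}=y_0(\lambda_iQ_{\lambda_i})\subseteq y_0(A+1)=y_0A+y_0,
\]
so translates of $Y$ are contained in translates of $y_0A$ (with $-Y$ giving $-y_0A$). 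Plugging $|X|=|\tilde{A}_{x_0}|\gg LM/|A|^{3}$, $|Y|\approx N$, the identity $LN^{2}=M$, and \eqref{eqn:RatioToShift} into the bound of the lemma collapses it to $|A|^{2}|A(A+1)|^{4}/M^{2}=\Gamma$. Intersecting the four resulting subsets of $\tilde{A}_{x_0}$ (one per $b_i$) produces the desired $A'\subseteq\tilde{A}_{x_0}$ with $|A'|\approx|\tilde{A}_{x_0}|$.

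\textbf{Where care is needed.} The only delicate point is insisting that the common affine line $xZ+y$ in Lemma~\ref{lem:CoveringByShifts} be anchored at $Z=A$ rather than $Z=A+1$, since only the former produces a bound involving $|A(A+1)|$ after invoking~\eqref{eqn:RatioToShift}. Pinning the auxiliary set $Y$ to the slope $\xi_i=a_i/x_0$ or $\lambda_i=b_i/y_0$, through the projections $P_{\xi_i}$ or $Q_{\lambda_i}$, is precisely what simultaneously enforces the $Z=A$ constraint, supplies the size $|Y|\approx N$, and places $Y$ inside a single translate of $\pm x_0A$ or $\pm y_0A$.
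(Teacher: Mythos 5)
Your proposal is correct and matches the paper's own proof essentially verbatim: both parts apply Lemma~\ref{lem:CoveringByShifts} with exactly the same choices ($X=a_iC$, $Y=a_iP_{a_i/x_0}$, $Z=A$, $x=y=a_i$ for the first part; $X=b_i\tilde{A}_{x_0}$, $Y=b_iQ_{b_i/y_0}$, $Z=A$, $x=b_i$, $y=0$ for the second), use \eqref{eqn:ProjectionsSizes} to land the translating set inside $\pm x_0A$ or $\pm y_0A$, substitute \eqref{eqn:RatioToShift} for $|A/A|$, and intersect the resulting good subsets across $i$. The arithmetic ($|Y|\approx N$, $|\tilde{A}_{x_0}|\gg LM/|A|^3$, $LM\cdot N^2=M^2$) is carried out correctly and yields precisely \eqref{eqn:1stCApp} and $\Gamma$.
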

\begin{proof}
We apply Lemma~\ref{lem:CoveringByShifts}, with $X=a_{i}C$, $Y= a_{i}P_{a_{i}/x_0}$, $Z= A$, $x = a_i,$ $y=a_i$ and $0 < \epsilon < 1/16$. Then there exist sets $C_{a_i} \subseteq C$ with $|C_{a_i}| \geq (1-\epsilon)|C|$ such that each of $a_{i}C_{a_i}$ can be covered by 
 \begin{displaymath}
 O_{\epsilon} \bigg(\frac{|A(A+1)|^2 |A/A|}{|C||a_{i}P_{a_{i}/x_0}|^2}\bigg) 
 \end{displaymath}
translates of $a_{i}P_{a_{i}/x_0} \subset x_0 A$ and by at most as many translates of $-x_0 A$. 
Let $C^{'}= C_{a_1} \cap\dots \cap C_{a_n}$, so that $|C^{'}| \geq (1 - n\epsilon)|C| \geq (3/4) |C|$. Then, by \eqref{eqn:RatioToShift} and \eqref{eqn:ProjectionsSizes}, it follows that \eqref{eqn:1stCApp} denotes the number of translates of $\pm x_0 A$ required to cover the sets $a_{i}C^{'}$ for $1 \leq i \leq n$.

Next, we apply Lemma~\ref{lem:CoveringByShifts}, with $X=b_{i}\tilde{A}_{x_0}$, $Y= b_{i}Q_{b_{i}/y_{0}}$, $Z= A$, $x = b_{i}$ and $y= 0$. Recalling \eqref{eqn:ProjectionsSizes}, \eqref{eqn:Ax0By0Sizes}, \eqref{A1Size} and proceeding similarly as above, we can identify a subset $A^{'} \subseteq \tilde{A}_{x_0}$, with $|A^{'}| \approx |\tilde{A}_{x_0}|$, such that the sets $b_{i}A^{'}$ are each fully contained in $O(\Gamma)$ translates of $\pm y_0A$. 
\end{proof}

We proceed to split the proof into four cases based on the nature of the quotient set $R(\tilde{A}_{x_0})$.

\subsection*{ Case 1:\nopunct} $R(\tilde{A}_{x_0} ) \neq R(B_{y_{0}})$.
\subsection*{ Case 1.1:\nopunct}
There exist elements $a, b, c, d \in  \tilde{A}_{x_0}$ such that 
 \begin{displaymath}
r = \frac{a-b}{c-d} \in R(\tilde{A}_{x_0}) \not\in  R(B_{y_{0}}).
 \end{displaymath}
By Lemma~\ref{lem:RBcard}, for any subset $Y \subseteq B_{y_{0}}$ with $|Y| \approx |B_{y_{0}}|$, we have
\begin{equation}
\label{eqn:case11}
|B_{y_0}|^{2} \approx |Y|^{2}= |Y - rY| = |aY - bY - cY + dY|.
\end{equation}
By Claim~\ref{coveringapp} and \eqref{eqn:Ax0By0Sizes}, there exists a subset $B^{'} \subseteq B_{y_{0}}$, with $|B^{'}| \approx |B_{y_{0}}|$, such that $dB^{'}$ is contained in 
 \begin{displaymath}
O\bigg(\frac{|A(A+1)|^4}{LN^3}\bigg)
 \end{displaymath}
 translates of $-x_0 A$ and $aB^{'}, bB^{'}, cB^{'}$ are contained in at most the same number of translates of $x_0A$. Thus, setting $Y = B^{'}$, by \eqref{eqn:case11}, we have
 \begin{displaymath}
\bigg(\frac{LN}{|A|} \bigg)^{2} \ll |A-A-A-A|\bigg(\frac{|A(A+1)|^4}{LN^3}\bigg)^{4}.
 \end{displaymath}
Then, by \eqref{iteratednound}, we get
 \begin{displaymath}
M^{6}N^{2}|A|^{21} \ll |A(A+1)|^{40}.
 \end{displaymath}
By \eqref{eqn:SEMECS} and \eqref{eqn:NLB}, we conclude the inequality
 \begin{displaymath}
|A(A + 1)|^{48} \gg (\log|A|)^{-8}|A|^{49}.
 \end{displaymath}
\subsection*{ Case 1.2:\nopunct}
There exist elements $a, b, c, d \in B_{y_{0}}$ such that 
 \begin{displaymath}
r = \frac{a-b}{c-d} \in R(B_{y_{0}}) \not\in R(\tilde{A}_{x_0}).
 \end{displaymath}
Then for any subset $Y \subseteq \tilde{A}_{x_0} $ with $|Y| \approx |\tilde{A}_{x_0}|$, by Lemma~\ref{lem:RBcard}, we have
\begin{equation}
\label{eqn:case12}
|\tilde{A}_{x_0}|^{2} \approx |Y|^{2} = |Y - rY| = |aY - bY - cY + dY|.
\end{equation}
By Claim~\ref{coveringapp}, there exists a subset $A^{'} \subset \tilde{A}_{x_0}$, with $|A^{'}| \approx |\tilde{A}_{x_0}|$, such that the sets $aA^{'}$, $bA^{'}$ and $cA^{'}$ are each fully contained in $O(\Gamma)$ translates of $y_0 A$ and $dA^{'}$ can be covered by $O(\Gamma)$ translates of $-y_0 A$. Thus, setting $Y = A^{'}$, by \eqref{eqn:case12}, we have
 \begin{displaymath}
\bigg(\frac{LM}{|A|^{3}} \bigg)^{2} \ll |A-A-A-A|\bigg(\frac{|A|^{2}|A(A+1)|^4}{M^2}\bigg)^{4}.
 \end{displaymath}
Applying \eqref{iteratednound} yields
 \begin{displaymath}
M^{10}L^{2}|A|^{9} \ll |A(A+1)|^{40}.
 \end{displaymath}
Hence, by \eqref{eqn:SEMECS} and \eqref{eqn:NLB}, we get
 \begin{displaymath}
|A(A + 1)|^{52} \gg (\log|A|)^{-12}|A|^{53}.
 \end{displaymath}
\subsection*{ Case 2:\nopunct} $1 + R(\tilde{A}_{x_0}) \nsubseteq R(\tilde{A}_{x_0})$.
There exist elements $a, b, c, d \in \tilde{A}_{x_0}$ such that
 \begin{displaymath}
r = 1 + \frac{a - b}{c - d} \not\in R(\tilde{A}_{x_0}) = R(B_{y_{0}}).
 \end{displaymath}
Let $Y_1 \subseteq B_{y_0}$ and $Y_2 \subseteq S_a$ be any sets with $|Y_1| \approx |B_{y_0}|$ and $|Y_2| \approx |S_a|$.
By Lemma~\ref{lem:PlRuRefined}, with $X = (c-d)Y_1$, there exists a subset $Y_1^{'} \subseteq Y_1$, with $|Y_1^{'}| \approx |Y_1|$, such that 
\begin{align}
\label{eqn:case2PR}
|Y_1^{'} - rY_2| &= |(c-d)Y_1^{'} - (c-d)Y_2 - (a-b)Y_2|\\ \nonumber &\ll \frac{|Y_1 - Y_2|}{|Y_1|}|(c-d)Y_1- (a-b)Y_2|.
\end{align}
Recall that $Y_1^{'} \subseteq B_{y_0}$ and $Y_2 \subseteq S_a \subseteq B_{y_0}$. Then Lemma \ref{lem:RBcard} gives
 \begin{displaymath}
|Y_1^{'}| |Y_2| = |Y_1^{'} - rY_2|.
 \end{displaymath}
Thus, by \eqref{eqn:case2PR} we have
\begin{equation}
\label{eqn:case2CD}
|Y_1^{'}||Y_1||Y_2| \ll |Y_1- Y_2| |cY_1 -dY_1 -aY_2 +bY_2|.
\end{equation}
Since $Y_1, Y_2\subseteq B_{y_0} \subseteq A+1$, we have
 \begin{displaymath}
|Y_1 -Y_2| \leq |A -A|.
 \end{displaymath}
Recall that $|Y_1^{'}| \approx |Y_1| \approx |B_{y_0}|$ and $|Y_2| \approx |S_a|$. Then by \eqref{eqn:Ax0By0Sizes}, \eqref{eqn:SzSize} and noting that $aY_2 \subseteq x_0 A$, we have
\begin{equation}
\label{eqn:case2reduced}
\bigg(\frac{LN}{|A|} \bigg)^{2}\bigg(\frac{LMN}{|A|^{4}} \bigg) \ll |A- A| |cY_1 -dY_1 -x_0 A +bY_2|.
\end{equation}

Now, by Claim~\ref{coveringapp}, there exist positively proportioned subsets $B_{y_0}^{'} \subseteq B_{y_0}$ and $S_a^{'} \subseteq S_a$ such that $cB_{y_0}^{'}$ and $dB_{y_0}^{'}$ can be covered by 
 \begin{displaymath}
O\bigg(\frac{|A(A+1)|^4}{LN^3}\bigg)
 \end{displaymath}
 translates of $x_0 A$ and $bS_a^{'}$ can be covered by 
 \begin{displaymath}
O\bigg(\frac{|A|^{3}|A(A+1)|^4}{LMN^3}\bigg)
 \end{displaymath}
 translates of $-x_0 A$. Thus, setting $Y_1 = B_{y_0}^{'}$ and $Y_2 = S_a^{'}$, by \eqref{eqn:case2reduced} it follows that
 \begin{displaymath}
\bigg(\frac{LN}{|A|} \bigg)^{2}\bigg(\frac{LMN}{|A|^{4}} \bigg) \ll |A-A||A-A-A-A|\bigg(\frac{|A|^{3}|A(A+1)|^4}{LMN^3}\bigg)\bigg(\frac{|A(A+1)|^4}{LN^3}\bigg)^2.
 \end{displaymath}
Using \eqref{differencebound} and \eqref{iteratednound}, this is further reduced to
 \begin{displaymath}
M^{8}|A|^{21} \ll |A(A+1)|^{44}.
 \end{displaymath}
Thus, by \eqref{eqn:SEMECS}, we get
 \begin{displaymath}
|A(A + 1)|^{52} \gg (\log|A|)^{-8}|A|^{53}.
 \end{displaymath}

\subsection*{Case 3:\nopunct} $x_0^{-1} \tilde{A}_{x_0}\cdot R(\tilde{A}_{x_0}) \nsubseteq R(\tilde{A}_{x_0})$.
There exist elements $a$, $b$, $c$, $d$, $e \in \tilde{A}_{x_0}$ such that
 \begin{displaymath}
r = \frac{a}{x_0}\frac{b - c}{d - e} \not\in R(\tilde{A}_{x_0} ) = R(B_{y_{0}}).
 \end{displaymath}
Given any set $Y_1 \subseteq B_{y_{0}}$, recalling that $S_a \subseteq B_{y_{0}}$, it follows from Lemma~\ref{lem:RBcard} that
 \begin{displaymath}
|Y_1||S_a| = |Y_1 - rS_a|.
 \end{displaymath}
For an arbitrary set $Y_2$, we apply Lemma~\ref{lem:PlRu}, with $X = \frac{b-c}{d-e}Y_2$, to get
\begin{align*}
|Y_2||Y_1||S_a| &= |Y_2||Y_1 - rS_a| \\ &\leq \bigg|Y_1 + \frac{b -c}{d -e} Y_2 \bigg| \bigg|Y_2 - \frac{a}{x_0}S_a \bigg| \\ & \leq |dY_1 -eY_1 +bY_2 -cY_2| |Y_2 - A |.
\end{align*}

By Claim~\ref{coveringapp}, we can identify sets $C_1 \subseteq S_d$ and $C_2 \subseteq P_{c/x_0}$ with $|C_1| \approx |S_d|$ and $|C_2| \approx |P_{c/x_0}| \approx N$, such that $eC_1$ is covered by 
 \begin{displaymath}
O\bigg(\frac{|A|^{3}|A(A+1)|^4}{LMN^3}\bigg)
 \end{displaymath}
translates of $x_0 A$ and $bC_2$ is covered by 
 \begin{displaymath}
O\bigg(\frac{|A(A+1)|^4}{|A|N^3}\bigg)
 \end{displaymath}
translates of $-x_0 A$. We set $Y_1 = C_1$ and $Y_2 = C_2$. Then, by \eqref{eqn:ProjectionsSizes}, \eqref{eqn:SzSize} and particularly noting that $dY_1$, $cY_2 \subset x_0 A$ and $Y_2 \subset A+1$, we have
 \begin{displaymath}
N\bigg(\frac{LMN}{|A|^{4}} \bigg)^{2} \ll |A-A||A-A-A-A|\bigg(\frac{|A|^{3}|A(A+1)|^4}{LMN^{3}}\bigg)\bigg(\frac{|A(A+1)|^4}{|A|N^3}\bigg).
 \end{displaymath}
Using \eqref{differencebound} and \eqref{iteratednound} we get
 \begin{displaymath}
M^{6}N^{3}|A|^{20} \ll |A(A+1)|^{40}.
 \end{displaymath}
By \eqref{eqn:SEMECS} and \eqref{eqn:NLB}, we conclude
 \begin{displaymath}
|A(A + 1)|^{49} \gg (\log|A|)^{-9}|A|^{50}.
 \end{displaymath}

\subsection*{Case 4:\nopunct} Suppose that Cases 1-3 do not happen.
Observing that $R(x_0^{-1}\tilde{A}_{x_0}) = R(\tilde{A}_{x_0})$, by Lemma~\ref{lem:QuotientSetSubfield} we deduce that $R(\tilde{A}_{x_0})$ is the field generated by $x_0^{-1}\tilde{A}_{x_0}$. Then according to the assumptions of Theorem~\ref{thm:ShiftExpander}, we consider the following three cases.

\subsection*{Case 4.1:\nopunct} $R(\tilde{A}_{x_0}) = \mathbb{F}_q$ and $|\tilde{A}_{x_0}| > q^{1/2}$. Let $Y$ denote an arbitrary subset of $\tilde{A}_{x_0}$ with $|Y| \approx |\tilde{A}_{x_0}|.$ By Lemma~\ref{lem:BouGlibPivot}, there exists an element $\xi \in\mathbb{F}_q^{*}$ such that $q\ll |Y + \xi Y|.$ Since $R(B_{y_{0}}) = R(\tilde{A}_{x_0}) = \mathbb{F}_q$, there exist elements $a, b, c, d \in B_{y_{0}}$, such that 
 \begin{displaymath}
q \ll |aY - bY + cY - dY|.
 \end{displaymath}
By Claim~\ref{coveringapp}, we can identify a positively proportioned subset $A^{'} \subset \tilde{A}_{x_0}$, such that $aA^{'}$, $bA^{'}$ and $dA^{'}$ can be covered by $O(\Gamma)$ translates of $y_0 A$ and $cA^{'}$ can be covered by $O(\Gamma)$ translates of $-y_0 A$. Thus, setting $Y = A^{'}$, we have
 \begin{displaymath}
q\ll |A -A -A -A|\bigg(\frac{|A|^{2}|A(A+1)|^4}{M^2}\bigg)^{4}.
 \end{displaymath}
By \eqref{iteratednound}, we get 
 \begin{displaymath}
M^{8}|A|^{15}q \ll |A(A+1)|^{40}.
 \end{displaymath}
By \eqref{eqn:SEMECS}, this gives the bound
 \begin{displaymath}
|A(A + 1)|^{48} \gg q(\log|A|)^{-8}|A|^{47}.
 \end{displaymath}
We point out that if $|\tilde{A}_{x_0}| > q^{1/2}$ then one only needs to consider Cases~1.1 and 4.1, since by Lemma~\ref{lem:RBFq} we have $R(\tilde{A}_{x_0}) = \mathbb{F}_q$.

\subsection*{Case 4.2:\nopunct}  Either $R(\tilde{A}_{x_0}) = \mathbb{F}_q$ and $|\tilde{A}_{x_0}| \leq q^{1/2}$ or $R(\tilde{A}_{x_0})$ is a proper subfield and $|A \cap cR(\tilde{A}_{x_0})| \ll |R(\tilde{A}_{x_0})|^{1/2}$ for all $c \in \mathbb{F}_q$. Since $R(\tilde{A}_{x_0})$ is the field generated by $x_0^{-1}\tilde{A}_{x_0}$, we have $\tilde{A}_{x_0} \subseteq x_0R(\tilde{A}_{x_0})$. Hence 
 \begin{displaymath}
|\tilde{A}_{x_0}|^2 =|\tilde{A}_{x_0} \cap x_0R(\tilde{A}_{x_0})|^2 \leq |A \cap x_0R(\tilde{A}_{x_0})|^2 \ll |R(\tilde{A}_{x_0})|.
 \end{displaymath}
Now, recalling that  $R(\tilde{A}_{x_0}) = R(B_{y_{0}})$, by Lemma~\ref{lem:pivotting}, there exist elements $a, b, c, d \in B_{y_{0}}$ such that for any subset $Y \subseteq \tilde{A}_{x_0}$ with $|Y| \approx |\tilde{A}_{x_0}|,$ we have
\begin{equation}
\label{eqn:case5.2}
|Y|^2 \ll |aY - bY + cY - dY|.
\end{equation}
By Claim~\ref{coveringapp}, there exists a subset $A^{'} \subseteq \tilde{A}_{x_0}$, with $|A^{'}| \approx |\tilde{A}_{x_0}|$, such that $cA^{'}$ can be covered by $O(\Gamma)$ translates of $-y_0 A$ and $aA^{'}, bA^{'}, dA^{'}$ can be covered by $O(\Gamma)$ translates of $y_0 A$.  We set $Y = A^{'}$ so that, by \eqref{eqn:case5.2}, we obtain
 \begin{displaymath}
\bigg(\frac{LM}{|A|^{3}}\bigg)^{2} \ll |A-A-A-A|\bigg(\frac{|A|^{2}|A(A+1)|^4}{M^2}\bigg)^{4}.
 \end{displaymath}
Applying \eqref{iteratednound} gives
 \begin{displaymath}
M^{10}L^{2}|A|^{9} \ll |A(A+1)|^{40}.
 \end{displaymath}
Then, by \eqref{eqn:SEMECS} and \eqref{eqn:NLB}, we have
 \begin{displaymath}
|A(A + 1)|^{52} \gg (\log|A|)^{-12}|A|^{53}.
 \end{displaymath}

\subsection*{Case 4.3:\nopunct}  $R(\tilde{A}_{x_0}) $ is a proper subfield and $|A \cap x_0 R(\tilde{A}_{x_0})| \ll |A|^{25/26}$. Recall that $\tilde{A}_{x_0}\subset x_0 R(\tilde{A}_{x_0})$. Then, by \eqref{A1Size} and \eqref{eqn:NLB}, we get 
 \begin{displaymath}
\frac{M^{2}}{|A|^{5}} \ll |\tilde{A}_{x_0}| \ll |A|^{25/26}.
 \end{displaymath}
Using \eqref{eqn:SEMECS}, we recover the bound
 \begin{displaymath}
|A(A + 1)|^{52} \gg (\log|A|)^{-52}|A|^{53}.
 \end{displaymath}

\section{Proof of Corollary~\ref{cor:AdEnergy}}
Let $\alpha \in \mathbb{F}_q^*$ and denote $S = A \cap (A -\alpha)$. Observing that $ S, S + \alpha \subset A,$ we deduce $|S(S + \alpha)| \leq |AA|$. Then, estimate \eqref{eqn:ShiftInter} follows by applying Theorem~\ref{thm:ShiftExpander} to the set $S$. Now, since $S \subset A$, if $A$ satisfies restriction \eqref{eqn:AdEnergyCon}, then $S$ can fail to satisfy restriction \eqref{eqn:ShiftECon} only if $|S| \ll |AA|^{52/53}$, which in fact gives the required estimate. This concludes the proof of estimate \eqref{eqn:ShiftInter}.

Next, noting that 
 \begin{displaymath}
|A \cap (A -\alpha)| = |\{ (a_1, a_2) \in A^2 : a_1 - a_2 = \alpha\}|,
 \end{displaymath}
similarly to \eqref{eqn:r1stM} and \eqref{eqn:r2ndM}, we have the identities
 \begin{displaymath}
 |A|^2 = \sum_{\alpha \in A-A} |A \cap (A -\alpha)| \quad \text{and} \quad E_{+}(A) =  \sum_{\alpha \in A-A} |A \cap (A -\alpha)|^2.
 \end{displaymath}
In particular, it follows that
 \begin{displaymath}
E_{+}(A) \ll  |A|^2\cdot\max_{\alpha \in \mathbb{F}_q^*}|A \cap (A -\alpha)|.
 \end{displaymath}
Thus the required bound on $E_{+}(A)$ follows from \eqref{eqn:ShiftInter}.

\section*{Acknowledgments}
The author would like to thank Igor Shparlinski for helpful conversations.

\end{document}